\documentclass[11pt]{article}
\usepackage[latin1]{inputenc}
\usepackage{amsmath, amssymb, amsfonts, amsthm, amssymb, amscd, mathtools, mathrsfs}
\usepackage{color}
\usepackage{multicol}
\usepackage{ulem, stmaryrd}
\usepackage{latexsym, epsfig}
\usepackage{amssymb, amsmath, amsfonts, amscd}
\usepackage{exscale}
\usepackage{ifthen}
\usepackage{longtable}
\usepackage{subfigure}
\usepackage{cite}
\usepackage{lscape}
\usepackage{tikz}
\usetikzlibrary{calc}
\usepackage{picture}

\newtheorem{theo}{\textbf{Theorem}\ }
[section]
\newtheorem{lemma}[theo]{\textbf{Lemma}\ }

\newtheorem{prop}[theo]{\textbf{Proposition}\ }

\newtheorem{notation}[theo]{Notation\ }

\newtheorem{example}[theo]{\textbf{Example}\ }
\newtheorem{examples}[theo]{\textbf{Examples}\ }

\def \bE{\mathbb{E}}

\def \bZ{\mathbb Z}
\def \bR{\mathbb{R}}

\def \bP{\mathbb{P}}
\def \bS{\mathbb{S}}

 \def \cC{\mathcal{C}}
  \def \cF{\mathcal{F}}

\def \cG{\mathcal{G}}

\def \by{\bf y}

\begin{document}
\title{
A local limit theorem for nonlattice multidimensional random walks in cones
}
\author{
D.C. Pham\footnote{CERADE ESAIP,  18 rue du 8 mai 1945 - CS 80022 - 49180 St-Barth\'elemy d'Anjou.
			LAREMA,  UMR CNRS 6093,  Universit\'e d'Angers,  France.   email: dpham@esaip.org}\,  \ 
			M. Peign\'e\footnote{Email: Institut Denis Poisson UMR 7013,   Universit\'e de Tours,  Universit\'e d'Orl\'eans,  CNRS  France.
marc.peigne@univ-tours.fr}
\quad \&  \quad 
D. T.  Son \footnote{
	 Institute of Mathematics,  Vietnam Academy of Science and Technology,  Vietnam. dtson@math.ac.vn}
}
\date{\today}
\maketitle

\begin{abstract}
   We study the asymptotic behavior of a   nonlattice random walk in a general cone of $\mathbb R^d$. Following the approach initiated by D. Denisov and V. Wachtel in \cite{DW2015},  we use a strong approximation of random walks by the Brownian motion   and prove local limit theorems,   combining integral theorems for random walks in cones with classical  theorems for unrestricted random walks. 
\end{abstract}

\noindent Keywords:   random walks,  first exit time,    theory of fluctuations

\vspace{5mm}

\noindent AMS classification   60J80,  60F17,  60K37.

\vspace{5mm}


\section{Introduction}
 The theory of random walks conditioned to stay in cones  is a popular study item  for a few decades,  which appears in many situations falling in various areas  connected to probability theory and combinatorics. We may mention for instance nonintersecting paths,  and their connections with Young diagram and different physical modeling \cite{Fu},  random walks in the quarter-plane with reflection at the boundary \cite{FIM},  random walks in Weyl chambers \cite{GZ} and their connections with random walks on Lie groups \cite{Var} or branching processes in random environment \cite{GK},  \cite{LPP}.

The main purpose of the present paper is to extend to nonlattice random walks  a recent and important result concerning discrete random walks,  due to D. Denisov and V. Wachtel \cite{DW2015}. More precisely,  we prove  a version of the Stone local limit theorem \cite{stone1965} for aperiodic random walks in cones and  then  a local central  limit theorem  for these walks.

 To prove  local limit theorem in the one-dimensional case,  i.e. for random walks conditioned to stay positive,  the usual way is to start with  the Wiener-Hopf factorisation \cite{LP1},  \cite{VW}. This powerful tool still exists for half spaces in higher dimension \cite{LP2}. For more general cones,  the question remained open for a few decades,  and needed a totally different approach. In \cite{DW2015},  the two authors proposed  such a new strategy,  based on a strong approximation of multidimensional random walks with multidimensional Brownian motion; for Brownian motion,  the study of its  exit times from a cone  and its  local behavior have been the subject of a wide literature. We refer to \cite{DW2015} for a presentation of the subject and references therein. We adapt here D.  Denisov and V. Wachtel approach to the case of nonlattice random walks.

 The paper is organized as follows: In Section \ref{Preliminaries},  we collect some basic notions of random walks in cones and state the main results of the paper concerning a local limit theorem for nonlattice random walks in cones (Theorem \ref{theostonecone}) and the asymptotic behavior of  random walks conditioned staying in cone (Theorem \ref{theolocal}). Section \ref{preparatory} is devoted to establishing several preparatory results. The proofs of Theorem   \ref{theostonecone} and Theorem \ref{theolocal} are given in section  \ref{sectionproofTheo1} and section \ref{sectionproofTheo2},  respectively.

\section{Preliminaries and main results}\label{Preliminaries}

\subsection{Random walks in cones}
 
We endow $\bR^d$ with the canonical scalar product $\langle  \cdot,  \cdot \rangle$  and let  $\{e_1,  \ldots,  e_d\}$  be  the canonical orthonormal basis  of the Euclidean space $\bR^d$.

 Denote by $\bS^{d-1}$ the unit sphere of $\bR^d$ and $\Sigma$  an open and connected subset of $\bS^{d-1}$. Let $\cC:= \{t{\bf x}\mid t>0 \ {\rm and} \ {\bf x} \in \Sigma\}$ be the cone generated by the rays emanating from the origin and passing through $\Sigma$.
 
We consider a random walk   $(S(n))_{n \geq 0}$ on $\bR^d$,  where  $S(0)=0$,  
\[
S(n)= X_1+\cdots+X_n
\]
 and $\{X_n,  n \geq 1\}$ is a family of independent copies  (whose distribution  is denoted $\mu$) of a random $d$-dimensional vector $X= (X^1,  X^2,  \ldots,  X^d)$ defined on a probability space $(\Omega,  \mathcal F,  \bP)$. For any $n \geq 1$,   we denote by $\mathcal F_n$ the $\sigma$-algebra generated by the random variables $X_1,  \ldots,  X_n$; by convention $\cF_0= \{\emptyset,  \Omega\}$. For any ${\bf x} \in \bR^d$,  the family  $({\bf x} +S(n))_{n \geq 0}$ is the random walk   on $\bR^d$   starting from    $\bf x$ and with steps $X_i$. 
 
 Throughout this paper,  we assume  ${\bf x} \in \cC$. In order to control the behavior of the random walk  $({\bf x} +S(n))_{n \geq 0}$   before leaving the cone $\cC$,   we introduce the following random variable $\tau({\bf x})$,  called ``{\it  the first exit time  of $\cC$}'',  
\[
\tau({\bf x}):= \inf\{n\geq 1\mid{\bf x}  +S(n) \notin \cC\}.
\]
The  random variables $\tau({\bf x})$ are stopping times with respect to the filtration $(\cF_n)_{n \geq 0}$. When the random vectors $X_i$ have moment of order 1 and are centered,  the  random variables $\tau({\bf x})$ are finite $\bP$-a.s.; equivalently,  the sequence $(\bP(\tau({\bf x})>n))_{n \ge 0}$   tends to $0$ as $n \to +\infty$. Under some stronger moment conditions,  D. Denisov and V. Wachtel   proved in   \cite{DW2015} that this  sequence behaves as $n^{-p/2}$ for some explicit  constant $p>0$ (see below).  In the case where the random walk  $(S(n))_{n \geq 0}$ takes values in  $\mathbb Z^d$,   they also control    the asymptotic behavior of the sequences $(\mathbb P({\bf x} +S(n)={\bf y},   \tau({\bf x})>n))_{n \geq 0}$  for  ${\bf x},  {\bf y}  \in  \mathbb Z^d\cap \mathcal C$. The main goal of the present section  is  to extend their result  for a general $\bR^d$-valued random walk.

  D. Denisov \& V. Wachtel's approach is based on the universality property of the Brownian motion. Let $B(t)$ be a standard Brownian motion  on $\bR^d$ and let   $\tau^{bm}({\bf x})$ be the exit time of $B(t)$ from the cone $\cC$, 
  \[
\tau^{bm}({\bf x}):= \inf\{t\geq 0\mid{\bf x}  +B(t) \notin \cC\}.
\]
 Let $\Delta$ be the   Laplace-Beltrami  operator on $\mathbb R^d$. An important role is played by the harmonic function  $u$ of $B(t)$ killed at the boundary of $\mathcal C$; this function $u$ is  the unique (up to a constant) minimal   solution  of the  boundary problem:
\[
\Delta u({\bf x})= 0 \quad   {\rm for} \quad {\bf x} \in \cC
\]
 where   $ u$ is positive   on  $ \mathcal C$ and satisfies  the   boundary  conditions    $u  \vert_{\partial \mathcal C}=0$.

The function $u$  can be found as follows.   Let $\Delta_{\mathbb S^{d-1}}$ be the Laplace-Beltrami operator on  $\mathbb S^{d-1}$. There exists a complete set of orthonormal eigenfunctions $m_j$ and corresponding eigenvalues $\lambda_j,  j \geq 1, $ satisfying
\begin{equation}\label{laplace}
 \left\{
 \begin{array}{cl}
\ 0<\ \lambda_1 \  <\ \lambda_2& \leq\ \lambda_3 \  \leq \ldots  
\\
\Delta_{\mathbb S^{d-1}}m_j({\bf x})&= \ -\lambda_jm_j({\bf x}),  \quad {\bf x} \in \Sigma \\
m_j({\bf x})&=\ 0,  \quad {\bf x} \in \partial \Sigma. 
\end{array}
\right.
\end{equation}
Let $p= \sqrt{\lambda_1+(d/2-1)^2}-(d/2-1).$ It holds $p>0$ and the function $u$ is given by: for any ${\bf x} \in \cC,  {\bf x}\neq {\bf 0}$,   
\begin{equation}\label{homogene-u}
u({\bf x})= \vert {\bf x}\vert^p m_1\left({{\bf x}\over \vert {\bf x}\vert}\right).
\end{equation}
The existence of  a  harmonic function  $V$ for the random walk $({\bf x} +S(n))_{n \geq 0}$  being killed at $\tau({\bf x})$ is  much more delicate to  prove,  we refer to \cite{DW2019} for  details  and comments. Some restrictive assumptions on $\cC$ are required,  depending in particular of the parameter $p$ defined above.  

We  introduce the following assumptions.

\noindent {$\bullet $    \underline{Cone assumption   {\bf C}}: {\it the cone $\cC$ is either starlike with $\Sigma $ in $C^2$ or convex.}

Recall   that  $\cC$ is starlike if there exists ${\bf a}_0 \in \Sigma$ such that ${\bf a}_0+\cC\subset \cC$ and dist$({\bf a}_0+\cC,  \partial \cC)>0$. In particular,  every convex cone is also starlike.  Notice that  in this case,  the set  of points ${\bf a}\in \mathcal C$ such that ${\bf a} +\cC\subset \cC$ has non empty interior.

We impose the following assumptions on the increments  $X_n$ of the random walk.

 \noindent  $\bullet $  \underline{Moment assumption  
   {\bf M}}:  
{\it We assume that $\mathbb E(\vert X \vert^\alpha)<+\infty$ with $\alpha = p$ if $p>2$ and  $\alpha >2$ when $p\leq 2$. }
 
  \noindent  $\bullet $  \underline{Normalization assumption {\bf N}}: {\it  We assume that  $\bE(X)={\bf 0}$,   $\bE((X^i)^2)=1$ and cov$(X^i,  X^j)= 0$ for $1\leq i,  j\leq d$ with $ i\neq j$.}
 
 Depending on the distribution $\mu$  of the $X_i$, the  random walk starting from ${\bf x} \in \mathcal C$ and conditioned to stay in $\mathcal C$ up to a certain time $n$ may non visit the whole cone $\mathcal C$.  Following \cite{DW2015}, we thus introduce the subset $\mathcal C_\mu$ of $\mathcal C$ defined by
\[
\mathcal C_\mu := \bigcup_{\gamma >0}\bigcap_{R>0}\bigcup_{n \geq 1}\{{\bf x}\in  \mathcal C\mid  \mathbb P({\bf x}+S(n) \in D_{ \gamma, R}, \tau({\bf x}) >n)>0  \} 
\]
where $D_{ \gamma, R}:= \{ {\bf x} \in \mathcal C\mid {\rm dist}({\bf x}, \partial \mathcal C)\geq  \gamma \vert {\bf x}\vert \ {\rm and} \ \vert {\bf x}\vert \geq R \}.$ The set $\mathcal C_\mu$ is not a cone in general and may be a proper subset of $\mathcal C$; let us give an example to illustrate this phenomenon.
\begin{example}
We consider the case $d=2$ with the cone $\mathcal C:= \mathbb R^{*+}\times \mathbb R^{*+}$.

Let the distribution $\mu$ of the jumps $X_i$  be  the centered probability measure  $\mu:= {1\over 4} \delta_{(2, -1)}+{1\over 4}\delta_{(0, -1)}+{1\over 2}\delta_{ (-1, 1)}$ where $\delta_{\bf x}$ denotes the Dirac measure at ${\bf x}$.  The  semi-group generated by the support of $\mu$   equals $\mathbb Z^2$ and for any ${\bf x} \in \mathbb R^2$, the random walk $({\bf x} +S(n))_{n \geq 0}$ may visit   any site  in $\mathbb Z^2$ with a positive probability. 

Nevertheless, for ${\bf x} \in   \mathcal C$, when we condition the path   ${\bf x} +S(n)$ to stay  up to time $n$ in $\mathcal C$, the choice of the starting point ${\bf x}$ is more restrictive.  
 
- If ${\bf x}\in ]0, 1]\times ]0, 1]$ then  $\mathbb P({\bf x}+S(1) \in \mathcal C)=0$, hence ${\bf x} \notin \mathcal C_\mu$. 

- If    $x_1>1$   then ${\bf y}= {\bf x}+(-1, 1) \in \mathcal C$ and   dist$({\bf y}, \partial \mathcal C)=\min(x_1-1, x_2+1)$; consequently, it holds  
\[ \mathbb P({\bf x}+S(1)\in D_{\gamma, R}, \tau({\bf x})>1)\geq \mu\{(-1, 1)\}=1/4
\]
for $\gamma= \gamma_{\bf x}= {\min(x_1-1, x_2+1) \over \sqrt{(x_1-1)^2+(x_2+1)}}$ and any $R>0$.
By definition of $\mathcal C_\mu$, this proves that ${\bf x}\in \mathcal C_\mu$ when $x_1>1$.
 
 - If $0<x_1\leq  1$ and $x_2> 1$, the same argument holds with  ${\bf y}= {\bf x}+(0, -1)$.\ 
Finally, $ \mathcal C_\mu=\mathcal C\setminus  (]0, 1]\times ]0, 1])$.\
\end{example}

We now quote  the results from \cite{DW2015} and  \cite{DW2019} about a construction and some properties of  a harmonic function $V$  for the random walk $\{{\bf x} +S(n)\}_{n \geq 0}$ killed at $\cC$ in terms of the function $u$.   We emphasize that in both papers the construction of $V$ is valid for $\mathbb R^d$-valued random walks; in  \cite{DW2019}, the hypotheses on the cone $\mathcal C$ are less restrictive than in \cite{DW2015} and it is this version that we quote here.

\begin{prop}\cite[Lemmas 3.6]{DW2019} {\rm Harmonic function for random walks killed at the boundary of $\cC$.} \label{HarF}
Assume hypotheses {\bf C},  {\bf M} and {\bf N}. Then,  the function $V : \mathcal C \rightarrow \bR^+$ defined by
\[
V({\bf x}) := \lim_{n \to \infty} {\bE}(u({\bf x} +S(n)); \tau({\bf x})>n)
\]
is well-defined,  finite and harmonic for $({\bf x} +S(n))_{n \geq 0}$ killed at leaving $\mathcal C$; in other words, 
\[
V({\bf x})= \bE(V({\bf x} +S(n)); \tau({\bf x})>n),  \quad {\bf x} \in \cC,  n \geq 1.
\]
Furthermore,  the function $V$ satisfies the following properties:
 	\begin{itemize}
	 		\item[(i)] (Positivity) The function 	$V$ is positive    on the set  $\mathcal C$.
			
		\item[(ii)] (``monotony" inside the cone)   If ${\bf x} \in \cC$ then $V({\bf x})\leq V({\bf x} +{\bf a})$ for  all  ${\bf a}\in \mathcal C$ such that ${\bf a} +\cC\subset \cC$.
		
		\item [(iii)]There exists $C_V>0$ such that $0\leq V({\bf x}) \leq C_V(1+\vert {\bf x}\vert^p)$ for any ${\bf x} \in \cC$.
				
		\item[(iv)] (Asymptotic behavior)  For any ${\bf x} \in   \Sigma, $
		\[V(t{\bf x})\sim t^p m_1({\bf x})\quad {\rm  as } \quad t \to \infty, \]
		the convergence being  uniform on compact set of $ \Sigma$.
		\end{itemize}
 	\end{prop}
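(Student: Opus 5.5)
The plan follows the Denisov--Wachtel construction. The key object is the correction
\[
f({\bf y}) := \bE\big(u({\bf y}+X)\big) - u({\bf y})
\]
on $\cC$, extending $u$ by $0$ outside the cone. I would first prove that $(u({\bf x}+S(n))+\sum_{k<n} f({\bf x}+S(k)))\mathbf 1_{\tau({\bf x})>n}$-type expressions telescope. More precisely, I would write
\[
\bE\big(u({\bf x}+S(n));\tau({\bf x})>n\big) = u({\bf x}) + \bE\sum_{k=0}^{n-1} f({\bf x}+S(k))\mathbf 1_{\{\tau({\bf x})>k\}} - \bE\big(u({\bf x}+S(\tau({\bf x}))); \tau({\bf x})\le n\big),
\]
using the optional stopping theorem for the martingale $u({\bf x}+S(n))-\sum_{k<n}f({\bf x}+S(k))$.

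Since $u$ is harmonic and homogeneous of degree $p$, a Taylor expansion gives $|f({\bf y})| \le C\,{\rm dist}({\bf y},\partial\cC)^{p-2-\delta}$ away from the boundary. Near the boundary, the truncation $\bE|X|^\alpha<\infty$ controls $f$ (hypothesis \textbf{M}). Under \textbf{C}, $u({\bf x}+S(\tau))$ is either $0$ (convex case) or small, since $u\le 0$ near the exterior by the $C^2$ regularity. The limit defining $V$ then exists provided
\[
\bE\sum_{k\ge 0}|f({\bf x}+S(k))|\mathbf 1_{\{\tau>k\}}<\infty .
\]

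\textbf{Main obstacle.} Proving this summability is the hard step. I would use the Brownian coupling (KMT/Sakhanenko strong approximation) on time scales where $|S(k)|\approx k^{1/2}$. I would show that the walk quickly moves to distance $\ge k^{1/2-\varepsilon}$ from $\partial\cC$, and then combine the bound on $f$ with the estimate $\bP(\tau>k)\le C(1+|{\bf x}|^p)k^{-p/2}$. That tail estimate is itself derived from the Brownian estimate $\bP(\tau^{bm}({\bf x})>t)\sim c\,u({\bf x})t^{-p/2}$ plus the coupling. Summing over dyadic blocks then gives convergence, and the resulting bound is $\le C(1+|{\bf x}|^p)$, which proves (iii).

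\textbf{Harmonicity.} By the Markov property,
\[
\bE\big(u({\bf x}+S(n+1));\tau>n+1\big)=\bE\big(\bE(u({\bf y}+S(n));\tau({\bf y})>n)|_{{\bf y}={\bf x}+S(1)};\tau>1\big).
\]
Letting $n\to\infty$ and using dominated convergence with (iii) and \textbf{M} gives harmonicity at one step; iterating gives it for all $n$.

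\textbf{Properties (i), (ii), (iv).}
\begin{itemize}
\item[(ii)] For ${\bf a}+\cC\subset\cC$ we have $\tau({\bf x}+{\bf a})\ge\tau({\bf x})$. By the above representation, $V({\bf x})=\lim \bE(u({\bf x}+S(\tau_n))\ldots)$, and replacing $u$ by the monotone quantity obtained via the Brownian comparison yields the inequality. Alternatively, $V({\bf x})=\lim\bE(V_0)$ with $u$ increasing along such ${\bf a}$, since $u$ is monotone in directions of ${\bf a}$ by the maximum principle.
\item[(iv)] Since $f$ has degree $p-2$, the correction term divided by $t^p$ vanishes at $t{\bf x}$. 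Hence $V(t{\bf x})/t^p\to u({\bf x})=m_1({\bf x})$, uniformly on compacts of $\Sigma$.
\item[(i)] Positivity follows from (iv) combined with harmonicity. From any ${\bf x}$ the walk reaches the region where $V>0$ before exiting with positive probability, in the nondegenerate directions guaranteed by \textbf{N}. Here one should restrict to $\mathcal C_\mu$, as in \cite{DW2019}.
\end{itemize}
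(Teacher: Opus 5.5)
\paragraph{Gap in the main step.} The paper gives no proof of this proposition; it is quoted from \cite{DW2019}. Your outline therefore has to stand on its own, and its central step is circular.

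You want to bound $\bE\sum_{k<\tau({\bf x})}|f({\bf x}+S(k))|$ using $\bP(\tau({\bf x})>k)\le C(1+|{\bf x}|^p)k^{-p/2}$, and you say this tail bound follows from the Brownian asymptotics plus the coupling. It does not. Under {\bf M} the strong approximation only gives $\bP(\max_{k\le n}|S(k)-B(k)|>n^{1/2-\gamma})=o(n^{-r})$ for some small $r>0$, and in general $r<p/2$. A direct comparison with $\tau^{bm}$ therefore yields at best $C|{\bf x}|^pn^{-p/2}+Cn^{-r'}$ with $r'<p/2$.

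To reach the rate $n^{-p/2}$, one lets the walk first get to distance $n^{1/2-\varepsilon}$ from $\partial\cC$ (at time $\nu_n$) and couples only afterwards. The fast escape you mention is then not the issue. The issue is the size of $u$ at the escape time, namely
\[
\sup_n \bE\big(u({\bf x}+S(\nu_n));\ \tau({\bf x})>\nu_n,\ \nu_n\le n^{1-\varepsilon}\big)\le C(1+|{\bf x}|^p),
\]
which is exactly the hard part of constructing $V$ and proving (iii). In the Denisov--Wachtel approach the tail bound is derived from this estimate, not used to prove it.

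What is missing is an $n$-uniform bound of this type, or on $\bE(u({\bf x}+S(n));\tau({\bf x})>n)$, obtained without a priori knowledge of the tail of $\tau({\bf x})$. The same uniform bound is what dominated convergence needs in your harmonicity step; (iii) for the limit $V$ does not suffice there.

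\paragraph{Behaviour of $f$ near the boundary, and (iv).} With $u$ extended by $0$, the boundary term $\bE(u({\bf x}+S(\tau({\bf x})));\tau({\bf x})\le n)$ vanishes identically for every cone, so the convex versus $C^2$ discussion is moot. All the overshoot is then hidden in $f$, and $f$ is \emph{not} controlled by moments near $\partial\cC$.

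For instance, take the quadrant, $u({\bf y})=y^1y^2$ (up to a constant), and increments with independent symmetric $\pm1$ coordinates, which satisfy {\bf N}. Then $f(s,t)=t(1-s)/2$ for $0<s<1<t$, so $f$ is of order $|{\bf y}|^{p-1}$ within distance $O(1)$ of $\partial\cC$. These contributions are summable only because the killed walk spends little time near the boundary, and that requires local estimates your sketch does not supply.

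Your argument for (iv) also has a scaling error. Started from $t{\bf x}$, the walk needs about $t^2$ steps to feel the boundary, so a correction of size ${\rm dist}^{p-2}$ would accumulate to order $t^p$, not $o(t^p)$. What makes the correction $o(t^p)$ is the extra decay ${\rm dist}^{p-2-\delta}$, which comes from $\Delta u=0$, {\bf N} and the $(2+\delta)$-moment. You also need an $o(t^p)$ bound for the times after the walk has approached $\partial\cC$, which again rests on the missing estimate above.

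\paragraph{Smaller points.} Your second argument for (ii) is correct once $u({\bf y}+{\bf a})\ge u({\bf y})$ on $\cC$ is established. On the unbounded cone this needs a Phragm\'en--Lindel\"of argument, for example using $|u({\bf y}+{\bf a})-u({\bf y})|=O(|{\bf y}|^{p-1})$, not the plain maximum principle. Then $\tau({\bf x}+{\bf a})\ge\tau({\bf x})$ and $u\ge0$ on $\cC$ give $V({\bf x})\le V({\bf x}+{\bf a})$ directly from the definition. Drop the first argument, via a ``Brownian comparison''.

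You are right that (i) can only hold on $\mathcal C_\mu$. In the paper's own example, $\tau({\bf x})=1$ a.s.\ for ${\bf x}\in\,]0,1]^2$, so $V({\bf x})=0$ there.
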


 The function $V$ contains   informations on the distribution of the exit time $\tau_{\bf x}$  from the cone $\cC$  of the random walk $({\bf x}+S(n))_{n \geq 0}$. More precisely, it holds:

\begin{prop}\label{exitTime} \cite[Theorems 1 \& 3]{DW2015}  Under hypotheses {\bf C},  {\bf M} and {\bf N},  there exists  a positive constant  $\mathrm{K}$ such that,  for any ${\bf x} \in \cC$, 
	\[
	\mathbb P (\tau({\bf x}) >n) \leq K  {1+ \vert x\vert^p\over n^{p/2}}.
	\]
	Furthermore,  
 there exists an universal  constant $\kappa_0>0$  such that
 	\[
 	\mathbb P  (\tau({\bf x}) >n) \sim \kappa_0 V({\bf x})n^{-p/2}\quad\mbox{as} \quad\,  n \to +\infty 
 	\]
 and   
	\[ \bP\Bigl({{\bf x} +S(n)\over \sqrt{n}}\in  \cdot  \Big\vert \tau({\bf x}) >n\Bigr)\longrightarrow \mu(\cdot) \quad  weakly, 
	\]
	where  $\mu$ is the probability measure on $\cC$ having density $  H_0 u({\bf y})   e^{-\vert{\bf y}\vert^2/2} $  with respect to the Lebesgue measure with    $H_0= \displaystyle\left( \int_{\mathcal C}u({\bf y})  \ e^{-\vert{\bf y}\vert^2/2} {\rm d}{\bf y}\right)^{-1}.$ 
	
\end{prop}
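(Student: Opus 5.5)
This statement is quoted from \cite[Theorems 1 \& 3]{DW2015}, and I would follow the Denisov--Wachtel strategy. The first step is to couple the walk with Brownian motion. Under {\bf M} and {\bf N}, a KMT/Sakhanenko-type strong approximation gives a Brownian motion $B$ such that
\[
\bP\Bigl(\sup_{k\leq n}|S(k)-B(k)|\geq n^{1/2-\gamma}\Bigr)=o(n^{-r})
\]
for some small $\gamma>0$ and some $r>p/2$. The admissible $r$ is dictated by the moment order $\alpha$; this is exactly why {\bf M} asks for $\alpha=p$ when $p>2$. For Brownian motion the corresponding facts are classical: $\bP(\tau^{bm}({\bf x})>t)\sim \kappa u({\bf x})t^{-p/2}$ uniformly for $|{\bf x}|=o(\sqrt t)$, and the rescaled endpoint conditioned on survival has the limit density $H_0u({\bf y})e^{-|{\bf y}|^2/2}$. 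Both follow from the eigenfunction expansion of the heat kernel killed on $\partial\cC$, i.e.\ from \eqref{laplace} and \eqref{homogene-u}.

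The second step is to reach the ``good region''. Let $\nu_n$ be the first time at which ${\bf x}+S(k)$ lies at distance at least $n^{1/2-\gamma/2}$ from $\partial\cC$. One has to show that $\bP(\nu_n>n^{1-\varepsilon},\tau({\bf x})>n^{1-\varepsilon})$ decays faster than any relevant power, using the fact that from any point the walk has a uniformly positive chance to enter a slab in a bounded number of steps. Then apply the strong Markov property at $\nu_n$. From ${\bf y}={\bf x}+S(\nu_n)$ the coupled Brownian motion stays within $n^{1/2-\gamma}$ of the walk, which is much smaller than the distance to the boundary. Shrinking and enlarging the cone slightly (using starlikeness, assumption {\bf C}) therefore sandwiches
\[
\bP(\tau({\bf y})>n-\nu_n)=\bigl(1+o(1)\bigr)\,\kappa\, u({\bf y})\,n^{-p/2}+o(n^{-p/2}).
\]
Integrating over the event $\{\tau({\bf x})>\nu_n\}$ gives $\bP(\tau({\bf x})>n)\sim\kappa_0 n^{-p/2}\bE\bigl(u({\bf x}+S(\nu_n));\tau({\bf x})>\nu_n,\nu_n\le n^{1-\varepsilon}\bigr)$.

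The third step is to identify $V$. I would show that the last expectation converges to $V({\bf x})$ of Proposition \ref{HarF}. The process $u({\bf x}+S(k))$ is almost a martingale on $\{\tau>k\}$: its Taylor error is controlled by $|\nabla^2u|$ and the moment {\bf M}, which yields summable corrections. Optional stopping at $\nu_n\wedge\tau$ then gives the claim, with uniform integrability supplied by (iii).

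The uniform upper bound $K(1+|{\bf x}|^p)n^{-p/2}$ comes from the same decomposition. The only extra observation is that for $|{\bf x}|\ge\sqrt n$ the bound is trivial, and for smaller ${\bf x}$ all the estimates above are uniform.

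The weak convergence of $({\bf x}+S(n))/\sqrt n$ follows by the same coupling: replace the event $\{\tau>n-\nu_n\}$ by $\{\tau>n-\nu_n,\ {\bf y}+S(n-\nu_n)\in\sqrt n A\}$ for a set $A$ with $\mu(\partial A)=0$, and use the Brownian limit of the first step.

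I expect the main obstacle to be the second step. The hard part is bounding the contribution of paths that linger near $\partial\cC$ longer than $n^{1-\varepsilon}$, and bounding $\bE\bigl(u({\bf x}+S(\nu_n));\nu_n>n^{1-\varepsilon}\bigr)$. Here I would first try an iterative argument: a geometric-type bound $\bP(\nu_n>k n^{1-2\gamma})\le (1-c)^k$, combined with the a priori bound $\bE\bigl(|S(\tau\wedge k)|^p\bigr)\lesssim$ powers of $k$.
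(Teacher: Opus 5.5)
The paper does not prove this proposition; it quotes it from Denisov--Wachtel \cite{DW2015}. Your outline is their strategy: strong approximation, the stopping time $\nu_n$, the strong Markov property at $\nu_n$, and identification of the limit through the near-martingale $u({\bf x}+S(k))$. There is, however, a genuine gap in your first two steps: the coupling rate you postulate is not available under {\bf M}. With $\bE|X|^\alpha<\infty$, Sakhanenko's inequality gives $\bP(\max_{k\le n}|S(k)-B(k)|\ge n^{1/2-\gamma})\preceq n^{1-\alpha(1/2-\gamma)}$, i.e.\ $r=\alpha/2-1-\alpha\gamma$. A single jump of size $n^{1/2-\gamma}$ shows this order cannot be improved in general. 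For $\alpha=p>2$ this $r$ is always $<p/2$. For $p\le 2$ and $\alpha=2+\delta$ with $\delta$ small, $r<\delta/2$ can lie far below $p/2$ (a half-plane already has $p=1$). So $r>p/2$ is never guaranteed, and {\bf M} is not there to make the coupling error $o(n^{-p/2})$. In \cite{DW2015} the moment of order $p$ serves the integrability of $u({\bf x}+S(n))$, the finiteness of $V$, and moment bounds for $\max_{k\le n}|S(k)|$. Consequently your sandwich only yields $\bP(\tau({\bf y})>n-\nu_n)=(1+o(1))\kappa_0\, u({\bf y})n^{-p/2}+o(n^{-r})$ with some small $r>0$. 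Integrating the $o(n^{-r})$ term over $\{\tau({\bf x})>\nu_n,\ \nu_n\le n^{1-\varepsilon}\}$ does not give $o(n^{-p/2})$ as it stands.

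The missing idea is to make the coupling error relative rather than absolute. Take ${\bf y}$ in your good region with $|{\bf y}|\le\sqrt n$. The condition ${\rm dist}({\bf y},\partial\cC)\ge n^{1/2-\gamma/2}$, together with the homogeneity \eqref{homogene-u} and the boundary behaviour of $m_1$, gives $u({\bf y})\ge c\,n^{p/2-c'\gamma}$. Hence $n^{-r(\gamma)}=o\bigl(u({\bf y})n^{-p/2}\bigr)$ as soon as $r(\gamma)>c'\gamma$. Since $r(\gamma)$ stays bounded away from $0$ as $\gamma\to0$, this holds for $\gamma$ small enough, which is why the parameters in \cite{DW2015} are taken sufficiently small. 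The error then integrates to $o(n^{-p/2})\,\bE\bigl(u({\bf x}+S(\nu_n));\tau({\bf x})>\nu_n,\nu_n\le n^{1-\varepsilon}\bigr)=o(n^{-p/2})$, because this expectation is bounded by $C(1+|{\bf x}|^p)$. That same bound is what produces the uniform estimate $K(1+|{\bf x}|^p)n^{-p/2}$.

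There is also a smaller slip in the same step. The walk cannot reach distance $n^{1/2-\gamma/2}$ from $\partial\cC$ with uniformly positive probability in a bounded number of steps. With blocks of length $n^{1-2\gamma}$, as in your last paragraph, the success probability from points near the boundary still tends to $0$. Blocks of length of order $n^{1-\gamma}$ are needed, giving $\bP(\nu_n>n^{1-\varepsilon},\tau({\bf x})>n^{1-\varepsilon})\le(1-c)^{n^{\gamma-\varepsilon}}$, and hence the constraint $\varepsilon<\gamma$.
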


 The universality of the constant $\kappa_0$ above comes from the analogous result valid for the Brownian motion \cite{BS}. Namely,  for any point  $\bf x \in \mathcal C$, if $\tau^{bm}({\bf x})$ denotes the first exit time from $\mathcal C$ of the Brownian motion starting from  $\bf x $, it holds  
 $$
 \mathbb P(\tau^{bm}({\bf x})>t)\sim \kappa_0 {u({\bf x})\over t^{p/2}}\quad {\rm as } \ t \to +\infty.
 $$

 \subsection{Main results}

Before stating the main results of this paper,   we make precise the definition of lattice and nonlattice distribution  in the multidimensional context. 
 We refer the reader to    \cite[Chap.VII, $\S$1, Theorem 2]{Bour}.   

Let $G$ be the closed subgroup of $\mathbb R^d$ generated by the support $S_\mu$ of $\mu$ and $V$  the biggest vectorial subspace of $\mathbb R^d$ which is included in $G $. Set $s:= {\rm dim} V$ and $G_1= G \cap V$. There  exists a discrete subgroup  $G_2$ of $G $ such that $G = G_1\oplus G_2$; the subgroup $G_2$ has rank $t= d-s$, in other words, there exists linearly independent elements $u_1, \ldots, u_{t}$ in $G$ such that $G_2= \mathbb Z u_1\oplus \cdots \oplus \mathbb Z u_{t}$.

The  structure  of $G$ induces some strong properties for the characteristic function $\hat{\mu}$ of $\mu$, defined by:
\[
\forall \theta   \in \mathbb R^d, \quad \hat{\mu}(\theta):= \mathbb E[e^{i\langle \theta, X_1\rangle}].
\]

Any element ${\bf x}\in G$ may be decomposed in a unique way as ${\bf x}= {\bf x}_1+{\bf x_2}$ with ${\bf x}_1\in G_1$ and  ${\bf x}_2\in G_2$;   we also write in the sequel ${\bf x}= ({\bf x}_1, {\bf x_2})$. The projection $\mu_1$ of the measure $\mu$ on $G_1$ is a {\it nonlattice distribution}  on $V$: in other words, its characteristic function ${\hat \mu}_1$    satisfies
\[\forall \theta \in \mathbb R^s \setminus\{{\bf 0}\} \quad \vert {\hat \mu}_1(\theta)\vert <1.
\] 
In contrast, the projection $\mu_2$  of $\mu$ on $G_2$ is a {\it  lattice distribution}  on $G_2$ whose support $S_{\mu_2}$ generates $G_2$.  Furthermore, its characteristic function ${\hat \mu}_2$ is  periodic on $\mathbb R^d$: indeed,    for any $\theta \in \mathbb R^s$ and $k_1, \ldots, k_t \in \mathbb Z$,
\[ {\hat \mu}_2(\theta)= {\hat \mu}_2(\theta+2\pi k_1 u_1/{\vert u_1\vert^2}+\ldots+2\pi k_t u_t/{\vert u_t\vert^2}).\]
In this case, the inequality  $\vert \hat{\mu}_2(\theta)\vert <1$  holds for any $\theta \in [-\pi/\vert u_1\vert, \pi/\vert u_1\vert]\times \cdots \times [-\pi/\vert u_t\vert, \pi/\vert u_t\vert]$  with $\theta \neq {\bf 0}$  if and only if the support of $\mu_2$ is not included in a proper coset of $\mathbb Z u_1\oplus\ldots \oplus \mathbb Zu_t$, or, equivalently, if and only if the group generated by $S_{\mu_2}-S_{\mu_2}$ is equal to $G_2$.

When $G_1=\{\bf 0\}$, the measure $\mu$ is supported by the discrete group $G_2$ and  is said {\it lattice}. When $G_1= \mathbb R^d$  it is said  {\it nonlattice}. As explained above,   lattice and nonlattice distributions do not exhaust all possibilities unless $d=1$.

In order to obtain a local limit theorem, we  introduce the following    {\it aperiodicity} condition:

\noindent $\bullet$ \underline{Aperiodicity assumption  {\bf A }}:  
{\it The measure $\mu$ is said aperiodic  if   its characteristic function $\hat \mu$  satisfies 
\[\vert {\hat \mu}(\theta)\vert <1\]
 for any 
 $\theta =(\theta_1, \theta_2) \in \bR^s\times \left( [-\pi/\vert u_1\vert, \pi/\vert u_1\vert]\times \cdots \times [-\pi/\vert u_t\vert, \pi/\vert u_t\vert]\right)$ such that $\theta \neq  {\bf 0}$.}

 The following statement is  a version, for multidimensional aperiodic random walks in a cone, of the Stone's local limit theorem   \cite{stone1965} in the non lattice case. This is  the main statement of the present paper. The proof follows the strategy developed in \cite[Theorem 1.5]{DW2015} and \cite[Corollary 1.3]{DW2019}  in the lattice case;  

\begin{theo}[\rm  Stone  local limit theorem for nonlattice random walk in cones]\label{theostonecone}
Assume  hypotheses  {\bf C},  {\bf M},  {\bf N} and {\bf A}. Let ${\bf B}_1\subset \mathbb R^s$ be a  bounded Borel  set  whose boundary is negligible with respect to the Lebesgue measure on $\mathbb R^s$   and $ {\bf y}_2\in G_2$ such that ${\bf B}:= {\bf B}_1\times \{{\bf y}_2\}\subset \mathcal C$.  Then,    as $n \to +\infty$,  uniformly in $ {\bf x} \in \cC$,
 \begin{equation*}\label{gnedenko}
 \Big\vert n^{p/2+d/2}\bP(  \tau({\bf x})>n, {\bf x}  +S(n) \in   {\bf B})- \    \kappa_0 H_0  V({\bf x})\  \int_{\bf B}  u( {\bf y}/\sqrt{n})e^{-{\vert  {\bf y}_1\vert^2+ \vert{\bf y}_2\vert^2\over 2n}} {\rm d}{\bf y}_1\Big\vert\to 0 
\end{equation*}
where $\kappa_0$ and $H_0$ are defined in Proposition \ref{exitTime}.
 \end{theo}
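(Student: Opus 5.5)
We will follow the Denisov--Wachtel strategy of \cite[Theorem 5]{DW2015}. The idea is to split the trajectory at an intermediate time, use the integral results of Proposition \ref{exitTime} on the first piece, and use a local limit theorem for the reversed walk on the second piece. Fix a small $\varepsilon>0$ and set $m=[(1-\varepsilon)n]$, $k=n-m$. The Markov property at time $m$ gives
\[
\bP(\tau({\bf x})>n, {\bf x}+S(n)\in {\bf B})=\int_{\cC} \bP(\tau({\bf x})>m, {\bf x}+S(m)\in {\rm d}{\bf z})\, \bP(\tau({\bf z})>k, {\bf z}+S(k)\in {\bf B}).
\]

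\textbf{Step 1: local estimate for the second factor.} We reverse time in the second factor. Let $S'(j)=-S(j)$ and let $\tau'$ denote the exit time of the reversed walk. The event $\{{\bf z}+S(k)\in{\bf B},\tau({\bf z})>k\}$ then becomes the event that the reversed walk started in ${\bf B}$ stays in $\cC$ for $k$ steps and ends near ${\bf z}$. A Stone-type local theorem for the unrestricted walk, combined with the maximal-inequality bound of Proposition \ref{exitTime} applied to $\tau'$, gives the uniform bound
\[
\bP(\tau({\bf z})>k, {\bf z}+S(k)\in{\bf B})\le C\,k^{-d/2}\, \bP(\tau'({\bf y})>k/2)\le C' k^{-d/2-p/2}
\]
uniformly over ${\bf y}\in{\bf B}$ (bounded). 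This bound will control the contribution of ${\bf z}$ outside a good region.

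\textbf{Step 2: the main term, where ${\bf z}$ is at scale $\sqrt n$ and far from $\partial\cC$.} Here we replace $\bP(\tau({\bf z})>k,{\bf z}+S(k)\in{\bf B})$ by the unrestricted probability $\bP({\bf z}+S(k)\in{\bf B})$. By Stone's local limit theorem, which requires assumption {\bf A} and handles the mixed structure $G=G_1\oplus G_2$ with Lebesgue measure on the $\bR^s$-component and counting measure on $G_2$, this probability equals $(2\pi k)^{-d/2}\int_{\bf B}e^{-|{\bf y}-{\bf z}|^2/2k}\,{\rm d}{\bf y}_1+o(k^{-d/2})$, uniformly in ${\bf z}$. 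The killing error is $\bP({\bf z}+S(k)\in{\bf B},\tau({\bf z})\le k)$. We control it by time reversal: the reversed walk from ${\bf B}$ must exit $\cC$ and then reach ${\bf z}$. Since ${\bf z}$ is at distance $\ge \delta\sqrt n$ from the boundary and $k=\varepsilon n$, this error is $o(k^{-d/2})$ times a factor that vanishes as $\varepsilon\to 0$.

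We then integrate against $\bP(\tau({\bf x})>m,{\bf x}+S(m)\in{\rm d}{\bf z})$. Proposition \ref{exitTime} gives $\kappa_0V({\bf x})m^{-p/2}H_0u({\bf w})e^{-|{\bf w}|^2/2}\,{\rm d}{\bf w}$ with ${\bf z}=\sqrt m\,{\bf w}$. The Gaussian convolution of $u(\cdot)e^{-|\cdot|^2/2}$ at time $1-\varepsilon$ with heat kernel at time $\varepsilon$ yields, as $\varepsilon\to0$, the density $H_0u({\bf y}/\sqrt n)e^{-|{\bf y}|^2/2n}$ at scale $n$. More exactly, we use the space-time harmonicity of the Brownian density killed at $\partial\cC$ and let $\varepsilon\to 0$ after $n\to\infty$. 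This produces $n^{-p/2-d/2}\kappa_0H_0V({\bf x})\int_{\bf B}u({\bf y}/\sqrt n)e^{-|{\bf y}|^2/2n}\,{\rm d}{\bf y}_1$.

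\textbf{Step 3: negligible regions and uniformity.} We discard the ${\bf z}$ with $|{\bf z}|>A\sqrt n$, with ${\rm dist}({\bf z},\partial\cC)<\delta\sqrt n$, or with $|{\bf z}|<\delta\sqrt n$. Step 1 combined with Proposition \ref{exitTime} shows that the $\mu$-mass of these sets is small, times $K(1+|{\bf x}|^p)m^{-p/2}$. Uniformity in ${\bf x}$ is delicate because the main term is $o(n^{-p/2-d/2})$ uniformly only after a further argument. For $|{\bf x}|\le \theta_n\sqrt n$ with $\theta_n\to0$ slowly, Proposition \ref{exitTime} together with the strong approximation gives uniform asymptotics. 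For $|{\bf x}|\ge\theta_n\sqrt n$, both terms must be shown small relative to $n^{-p/2-d/2}$ times the normalisation; I would first try to prove this uniform version of the conditioned integral limit theorem, following \cite{DW2015}.

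\textbf{Main obstacle.} I expect the main obstacle to be Step 2's control of the killing error together with the uniformity in ${\bf x}$. Both require a local (not just integral) bound for the reversed walk killed at $\partial\cC$, adapted to the mixed lattice/nonlattice group $G$.
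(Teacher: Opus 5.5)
\textbf{There is a genuine gap, and it starts with what the uniformity is about.}

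For a fixed bounded ${\bf B}$ and a fixed ${\bf x}$ the statement carries no information. Your own Step 1 combined with Proposition \ref{exitTime} (or \eqref{localmaj}) gives $n^{p/2+d/2}\bP(\tau({\bf x})>n,{\bf x}+S(n)\in{\bf B})=O((1+|{\bf x}|^p)n^{-p/2})$. The main term is $O(V({\bf x})n^{-p/2})$, because $u({\bf y}/\sqrt n)=n^{-p/2}u({\bf y})$.

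Uniformity over all ${\bf x}\in\cC$ is what your Step 3 tries to reach through $\theta_n$, and it is left open; it cannot be expected. Take ${\bf x}=\sqrt n\,{\bf w}$. Both terms are then of order one, but they have different limits. By time reversal and the bulk case applied to $-S$, the left term behaves like $\kappa_0H_0u({\bf w})e^{-|{\bf w}|^2/2}\int_{\bf B}\widetilde V$. The main term tends to $\kappa_0H_0u({\bf w})\int_{\bf B}u$.

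What the paper's proof actually establishes, following \cite[Theorem 5]{DW2015}, is different. It fixes ${\bf x}$, takes ${\bf B}=\llbracket{\bf y},{\bf y}+\delta{\bf 1}\rrbracket$, and proves the estimate uniformly in the position ${\bf y}\in\cC$ of the box, which lives on the scale $\sqrt n$.

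\textbf{That uniformity is where the work is, and it requires splitting according to the target ${\bf y}$, not only the intermediate point ${\bf z}$.} Your proposal does not contain this split, and two of your steps fail without it.
\begin{itemize}
\item Your Step 2 replaces the killed kernel by the unkilled one. This needs ${\bf y}$, as well as ${\bf z}$, to be at distance $\gg\sqrt k$ from $\partial\cC$; these are exactly the hypotheses of \eqref{estimate}$(c)$.
\item Your Step 1 bound is useless for $|{\bf y}|\asymp\sqrt n$, since the factor $(1+|{\bf y}|^p)k^{-p/2}$ is then of order $\varepsilon^{-p/2}$.
\end{itemize}

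The paper treats three regions for the target.
\begin{itemize}
\item \emph{Far targets}, $|{\bf y}|>A\sqrt n$. It uses the Markov property at $n/2$, the Gaussian bound \eqref{estimate}$(b)$, and tightness of the conditioned law.
\item \emph{Near-boundary targets}, ${\rm dist}({\bf y},\partial\cC)\le 2\epsilon\sqrt n$. This is the key step missing from your plan. The Markov property at $n/2$, together with \eqref{estimate1} and the reversal inequality \eqref{key1}, gives the bound $\preceq n^{-p/2-d/2}\bP(\tilde\tau_{-\delta}({\bf y})>n/2)$. This last probability is shown to vanish as $\epsilon\to 0$, uniformly in such ${\bf y}$, by coupling with Brownian motion and the boundary Harnack principle. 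Note that the reversal must use the thickened cone $\cC_{-\delta}$ of Proposition \ref{keyinclusionsGENERALversion}, because the reversed walk starts from a random point of the box; your $\tau'$ ignores this.
\item \emph{Bulk targets}. Only here does your Step 2 apply, and it then essentially coincides with the paper's Step 3. One difference is worth adopting. The paper takes the last segment of length $\epsilon^3 n$ and discards intermediate points with $|{\bf z}-{\bf y}|>\epsilon\sqrt n$ through the Gaussian factor $e^{-c/\epsilon}$, which beats $\epsilon^{-3d/2}$. The remaining ${\bf z}$ are then automatically at distance $\ge\epsilon\sqrt n$ from $\partial\cC$, so no separate discarding by distance to the boundary is needed.
\end{itemize}
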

As a consequence,  we obtain the asymptotic behavior of the return probabilities sequence $( \bP(  \tau({\bf x})>n,  {\bf x}  +S(n) \in{\bf B}) )_{n \geq 0}$. Note that   Theorem  \ref{theostonecone} says only that  this probability is $o( n^{p/2+d/2})$.
\begin{theo}[\rm Asymptotic behavior of the return probabilities for random walks  in cone]\label{theolocal}

Assume  hypotheses  {\bf C},  {\bf M},  {\bf N} and {\bf A}.  Let ${\bf B}_1\subset \mathbb R^s$ be a  bounded Borel  set  whose boundary is negligible with respect to the Lebesgue measure on $\mathbb R^s$   and $ {\bf y}_2\in G_2$ such that ${\bf B}:= {\bf B}_1\times \{{\bf y}_2\}\subset \mathcal C$.  Then,  there exists  a  constant $\mathrm{C}$ such that,    for any $n \geq 1$ and $  {\bf x} \in \cC$,
\begin{equation}\label{localmaj}
\bP( \tau({\bf x})>n, {\bf x}  +S(n) \in  {\bf B})\leq   
\mathrm{C} \  { 1+\vert {\bf x}\vert^p \over n^{p+d/2}} \  \int_{{\bf B}_1}\widetilde{V}({\bf y}_1, {\bf y}_2) {\rm d} {\bf y}_1.
\end{equation}
Furthermore,   as $n \to +\infty$,  
\begin{equation}\label{local}
\bP( \tau({\bf x})>n,  {\bf x}  +S(n) \in  {\bf B})\sim  
\mathfrak \kappa_1  { V({\bf x})\over n^{p+d/2}} \int_{{\bf B}_1}\widetilde{V}({\bf y}_1, {\bf y}_2) {\rm d} {\bf y}_1,
\end{equation}
where $\widetilde V$ is the harmonic function  for the random walk $(-S(n))_{n \geq 0}$   killed at   time $\tilde{\tau}$ of leaving $\mathcal C$ and $\kappa_1= \kappa_0^2H_0^2  \int_{\mathcal C} u^2({\bf y}) e^{-\vert {\bf y}\vert^2/2}{\rm d}{\bf y}.$  
\end{theo}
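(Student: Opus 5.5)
The plan is the Denisov--Wachtel time-splitting argument: split the path at time $m=[n/2]$ and use time reversal. Write
\[
\bP(\tau({\bf x})>n, {\bf x}+S(n)\in{\bf B})=\int_{\mathcal C}\bP(\tau({\bf x})>m, {\bf x}+S(m)\in {\rm d}{\bf z})\,\bP(\tau({\bf z})>n-m, {\bf z}+S(n-m)\in{\bf B}).
\]
Reversing the increments of the second factor, for ${\bf y}\in{\bf B}$ it becomes the probability that the walk $({\bf y}-S(k))_{k}$ stays in $\mathcal C$ up to time $n-m$ and ends near ${\bf z}$. This walk satisfies {\bf C}, {\bf M}, {\bf N}, {\bf A} as well, with harmonic function $\widetilde V$ and exit time $\tilde\tau$. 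By Fubini,
\[
\bP(\tau({\bf x})>n, {\bf x}+S(n)\in{\bf B})=\int_{{\bf B}_1}{\rm d}{\bf y}_1\;\text{``}\bE\bigl[\text{density of }{\bf x}+S(m)\text{ on }\{\tau>m\}\text{ at }{\bf y}-\widetilde S(n-m)\,;\,\tilde\tau({\bf y})>n-m\bigr]\text{''}.
\]
To make this rigorous in the nonlattice setting, I would discretize: cover $\mathcal C$ by small cubes $Q$ of side $\delta$ in the $G_1$ directions, times points of $G_2$. Then I would bound each probability $\bP(\tau({\bf x})>m, {\bf x}+S(m)\in Q)$ using Theorem \ref{theostonecone}.

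\textbf{Upper bound (\ref{localmaj}).} Theorem \ref{theostonecone} gives, uniformly in ${\bf x}$ and $Q$,
\[
\bP(\tau({\bf x})>m, {\bf x}+S(m)\in Q)\le C\,m^{-p/2-d/2}\bigl(V({\bf x})\sup u+o(1)\bigr)\le C(1+|{\bf x}|^p)\,m^{-p/2-d/2}\,|Q|,
\]
using $u$ bounded on compacts of the rescaled region, Proposition \ref{HarF}(iii), and the fact that $u(\cdot/\sqrt n)e^{-|\cdot|^2/2n}$ is bounded. The first factor is therefore at most $C(1+|{\bf x}|^p)n^{-p/2-d/2}$ times Lebesgue/counting measure. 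Summing over $Q$, the second factor integrates to $\bP(\tilde\tau({\bf y})>n-m)$ for ${\bf y}$ in ${\bf B}$. A smoothing of ${\bf B}$ of size $\delta$ is needed here, which is harmless since ${\bf B}_1$ is bounded with negligible boundary. By Proposition \ref{exitTime} applied to $-S$, this gives a bound of order $\int_{{\bf B}_1}\widetilde V\,n^{-p/2}$, and in total $n^{-p-d/2}$. One subtlety: Proposition \ref{exitTime} gives $(1+|{\bf y}|^p)$ instead of $\widetilde V({\bf y})$. Since ${\bf B}$ is a fixed bounded set, I would use the asymptotic $\bP(\tilde\tau({\bf y})>k)\sim\kappa_0\widetilde V({\bf y})k^{-p/2}$ together with the monotonicity (ii) to compare. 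Alternatively I would absorb this into the constant $\mathrm C$, noting that $\widetilde V$ is positive and bounded below on compacts of ${\bf B}$ (Proposition \ref{HarF}(i)), after a harmless continuity argument.

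\textbf{Asymptotics (\ref{local}).} I would refine the splitting. The main contribution comes from ${\bf z}$ with $|{\bf z}|\asymp\sqrt n$, where Theorem \ref{theostonecone} yields
\[
\bP(\tau({\bf x})>m,{\bf x}+S(m)\in Q)\approx \kappa_0H_0V({\bf x})\,m^{-p/2-d/2}\,u({\bf z}/\sqrt m)\,e^{-|{\bf z}|^2/2m}|Q|.
\]
Hence the whole expression is approximately
\[
\kappa_0H_0V({\bf x})\,m^{-p/2-d/2}\int_{{\bf B}_1}\bE\bigl[g_m({\bf y}-\widetilde S(n-m));\tilde\tau({\bf y})>n-m\bigr]{\rm d}{\bf y}_1,\qquad g_m({\bf z})=u({\bf z}/\sqrt m)e^{-|{\bf z}|^2/2m}.
\]
Conditioned on $\tilde\tau({\bf y})>n-m$, the vector $({\bf y}-\widetilde S(n-m))/\sqrt{n-m}$ converges weakly to the law with density $H_0u\,e^{-|\cdot|^2/2}$ (Proposition \ref{exitTime}). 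Since $g_m$ is bounded and continuous after rescaling and $m/(n-m)\to1$, the expectation is asymptotically
\[
\kappa_0\widetilde V({\bf y})(n-m)^{-p/2}H_0\int u^2e^{-|{\bf y}|^2}\,{\rm d}{\bf y},
\]
after the change of variables. The Gaussian weight and the powers of $2$ coming from $m\approx n/2$ then have to be tracked. I expect them to combine with the chosen normalization into the stated $\kappa_1$; I would verify the constant by redoing the computation with a general split $m=[tn]$ and checking that the result is independent of $t$. Dominated convergence in ${\bf y}_1$ over ${\bf B}_1$ is justified by the upper bound.

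\textbf{Main obstacle.} The error terms are where the real work lies. These are the contributions of ${\bf z}$ with $|{\bf z}|\le\varepsilon\sqrt n$ or $|{\bf z}|\ge A\sqrt n$, together with the $o(1)$ error from Theorem \ref{theostonecone}. That error is only $o(n^{-p/2-d/2})$ rather than relatively small, so it must be multiplied by $\bP(\tilde\tau>n-m)=O(n^{-p/2})$ to be negligible, which it is. The small-$|{\bf z}|$ region is controlled by the bound $u({\bf z}/\sqrt m)\le C\varepsilon^p$ together with the upper estimate. The large-$|{\bf z}|$ region is controlled by the Gaussian tail, via Chebyshev applied to the conditioned walk. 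Finally, the discretization error in $\delta$ is handled by the uniformity of Theorem \ref{theostonecone} over cubes and by letting $\delta\to0$ after $n\to\infty$.
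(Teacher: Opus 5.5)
Your overall architecture is the paper's: split at $m=\lfloor n/2\rfloor$, reverse the second half, and apply Theorem \ref{theostonecone} to the first half. Treating the reversed half with the integral limit theorem of Proposition \ref{exitTime}, instead of a second application of Theorem \ref{theostonecone}, is a legitimate simplification. Your constant is also right: for $m=n/2$ one gets $\kappa_0^2H_0^2V({\bf x})\widetilde V({\bf y})(n/2)^{-p-d/2}\int_{\cC}u^2(w)e^{-|w|^2}{\rm d}w$, and the substitution $w=v/\sqrt2$ gives exactly $\kappa_1$.

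There is, however, a genuine gap: the two steps that are new in the nonlattice case are treated as formalities. The first is the time reversal. Your integral over ${\bf y}_1\in{\bf B}_1$ does not exist in general. For a purely atomic $\mu$ satisfying {\bf A} (finitely supported examples exist), ${\bf x}+S(n)$ has no density, so there is nothing to which dominated convergence in ${\bf y}_1$ could be applied. Once the endpoint is discretized into boxes $\llbracket{\bf y},{\bf y}+\tilde\delta{\bf 1}\rrbracket$, reversing from the grid point ${\bf y}$ instead of the true endpoint is no longer exact. The condition ``the reversed walk stays in $\cC$'' must be replaced by ``stays in $\cC_{-\tilde\delta}$'' for an upper bound, and by ``stays in $\cC_{\tilde\delta}$'' with a shrunken target box for a lower bound. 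This is Proposition \ref{keyinclusionsGENERALversion}, and you need it in three places:
\begin{itemize}
\item in your upper bound, to turn $\sum_Q\sup_{{\bf z}'\in Q}\bP(\tau({\bf z}')>n-m,\,{\bf z}'+S(n-m)\in{\bf B})$ into $O(\sum_{\bf y}\bP(\tilde\tau_{-\tilde\delta}({\bf y})>n-m))$;
\item to make the $o(n^{-p/2-d/2})$ error of Theorem \ref{theostonecone} summable over $Q$;
\item for the lower bound.
\end{itemize}
``Smoothing ${\bf B}$'' does not achieve this. The uniformity of Theorem \ref{theostonecone} over cubes is irrelevant here, because this error lives in the second half of the path.

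The second missing step is the limit in $\tilde\delta$. After the comparison, the upper and lower limits (as $n\to\infty$, then $\delta\to0$) are, up to harmless factors, sums of the form $\sum_{\bf y}\tilde\delta^d\,\widetilde V({\bf y}\pm t_{\tilde\delta}{\bf 1})$, taken over the boxes meeting ${\bf B}$ and the boxes contained in ${\bf B}$ respectively. You must show both converge to $\int_{{\bf B}_1}\widetilde V$ as $\tilde\delta\to0$. Since $\widetilde V$ need not be continuous (see the examples of Subsection \ref{Subsection3.1}), this is a real step. It is exactly what the local Riemann integrability of Subsection \ref{Subsection3.1} provides, and it rests on the monotonicity (ii) of Proposition \ref{HarF} in the direction ${\bf 1}\in{\bf A}$, which your proposal never uses.

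A smaller point concerns \eqref{localmaj}. There you invoke Theorem \ref{theostonecone} uniformly in ${\bf x}$, but that uniformity cannot hold: for fixed $n$ the main term grows like $|{\bf x}|^p$, while the probability is at most $1$. The proof of Theorem \ref{theostonecone} only gives uniformity in the position of the box, for fixed ${\bf x}$. The bound you actually need is $\bP(\tau({\bf x})>m,{\bf x}+S(m)\in Q)\preceq(1+|{\bf x}|^p)m^{-p/2-d/2}$, uniformly in ${\bf x}$ and $Q$. It follows instead from the Markov property at time $\lfloor m/2\rfloor$, the tail bound of Proposition \ref{exitTime}, and \eqref{estimate}(a). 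Alternatively, use the paper's three-block decomposition, where the middle block needs only the classical local limit theorem.
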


 The lattice case  $G_1=\{\bf 0\}$  has been studied in \cite{DW2015}.   
 The fact that random walk$(S(n))_{n \geq 0}$ is non lattice here  introduces additional technical difficulties. Indeed, for all points ${\bf x}, {\bf y} \in \mathcal C$ the probability of events $({\bf x}+S(n)={\bf y})$ is zero, so the arrival point ${\bf y }$must be replaced by a Borel set of non-zero Lebesgue measure containing it. This operation greatly complicates the time reversal process,  which is the key point of the proof. We overcome  in Section \ref{Subsection3.2} 
 this difficulty by thickening and reducing the cone $\mathcal C$.

 In the following sections, in order to simplify the notations, we focus on the nonlattice case   $G_1=\mathbb R^d$. The mixed case, neither lattice nor non lattice,  is an easy  consequence of these two cases, by using the decomposition $G= G_1\oplus G_2$.  From now on, we thus assume $G_1= \mathbb R^d$ and set ${\bf y}_1={\bf y} $ and ${\bf y}_2=0$ in formulae \eqref{localmaj} and \eqref{local}.
 
  To give  meaning to the statement of Theorem \ref{theolocal} in the nonlattice case,  it is important to notice that  $ V $ and $\widetilde{V}$ are locally Riemann integrable. This is a consequence of  monotony  property of these functions inside the cone  stated in Proposition \ref{HarF},   we provide  detailed   arguments in 
  Subsection \ref{Subsection3.1}. 

  \begin{notation}  For any sequences $u=(u_n)_{n \geq 0}, v=(v_n)_{n \geq 0}$ of  positive functions, we write $u\preceq v$, or $u_n\preceq v_n$ if there exists a constant $c>0$ such that $u_k \leq cv_k$ for any $k \geq 1.$
\end{notation}

 \section{Preparatory results}\label{preparatory}
 
 This section is devoted to establishing several preparatory results for the proof of the main results. These results are:
\begin{itemize}
\item the local Riemann integrability of the harmonic function $V$ for random walk killed at the boundary of $\cC$ (Subsection \ref{Subsection3.1});
\item the approximation of the reverse exit time (Subsection \ref{Subsection3.2});
\item   the estimate on the probability $\bP(\tau({\bf x}) >n,  {\bf x}  +S(n) \in {\bf y}+ \mathrm{P})$,  where ${\bf x}, {\bf y}$ belong to   $\cC$ and $\mathrm{P}$  is a  neighborhood of the origin ${\bf 0}$ which is a product of intervals (Subsection \ref{Subsection3.3}).
\end{itemize}

\subsection{Local Riemann integrability of the harmonic function $V$}\label{Subsection3.1}

Recall from the content of Proposition \ref{HarF}  that the harmonic function $V:\mathcal C\rightarrow \mathbb R^+$ for the random walk $S(n)$ killed at the boundary of $\mathcal C$ is defined by 
\begin{equation}\label{Harmonicfunction}
V({\bf x}):=\lim_{n\to\infty}  \bE(u({\bf x} +S(n)); \tau({\bf x})>n).
\end{equation}
  It is well known that the function $u$ is analytic on $\mathcal C$.  In some very special cases in the choice of $\mathcal C$ and $\mu$,  the same property holds for $V$ since it is a  polynomial. See for instance \cite{HR} in the case where $\mathcal C$ is the  Weyl chamber  of some finite Coxeter groups  and $\mu$ a probability distribution ``adapted"  to $\mathcal C$. See also \cite{DEW}  for general cones  but with  some restrictive assumptions on the distribution $\mu$. Nevertheless, in general, the function $V$ is not even   continuous.  We illustrate this fact with examples.
\begin{examples}
In dimension 1, when $\mathcal C$ is the half-line $(0, +\infty)$, it holds, for any $x \geq 0$, 
\[
V(x) = V_\mu(x)= \sum_{n \geq 0} \mu_+^{\star n}([0, x]),
\]
where $ \mu_+$ is the distribution of the first strict ascending ladder height  of the random walk  $(S(n))_{n \geq 0}$ (see for instance \cite[Theorem 2.6]{LP2}).
In some cases,  the function $V_\mu$ may be non continuous on $\mathbb R^+$. 

\begin{enumerate}
\item   When $\mu$ is supported by $\mathbb Z$, $V_\mu$ is a step function with jumps in $\mathbb N_0$.

\item  If $\mu:= {1\over \sqrt{2}+1} \delta_{\sqrt{2}}+{\sqrt{2}\over \sqrt{2}+1} \delta_{-1}$ then  the measure $\mu_+$ is supported by the set $\{ p\sqrt{2}-q\mid p, q \in \mathbb N\}\cap ]0, \sqrt{2}]\}$, hence   $\displaystyle \sum_{n \geq 0} \mu_+^{\star n}$ is supported by   $T_+(\sqrt{2}, -1):= \{ p\sqrt{2}-q\mid p, q \in \mathbb N\}\cap \mathbb R^{*+}$. The set   $T_+(\sqrt{2}, -1)$ is  dense in $\mathbb R^{*+}$ and it coincides with  the set of discontinuity of  $V_\mu$ since $V_\mu(x)-V_\mu(x+0^-)=\mu_+(x)$ for any $x \in \mathbb R^*+$.  

\item In higher dimension, the same phenomenom may appear. Indeed, if $\mathcal C = \mathbb R^{*+}\times \mathbb R^{*+}$ and $\mu=\nu_1\otimes \nu_2$ where $\nu_1$ and $\nu_2$  are both centered probability measure on $\mathbb R$  then   $V_\mu=  V_{\nu_1} \otimes V_{\nu_2}$,   where $  V_{\nu_i}, i= 1, 2$,  is the harmonic function on $\mathbb R$ corresponding to $\nu_i$ and  the half-line $(0, +\infty)$. This can be seen for instance as a consequence of \cite[Theorem 2.6]{LP2} since the coordinates of  the random walk with jump distribution $\mu$ on $\mathbb R^2$ are independant. Consequently, if      $\nu_1=\nu_2= {1\over \sqrt{2}+1} \delta_{\sqrt{2}}+{\sqrt{2}\over \sqrt{2}+1} \delta_{-1}$ as in $2)$ then  the harmonic function   is discontinuous  set of $V_\mu$ in $\mathcal C$ equals    $ T_+(\sqrt{2}, -1)\times \mathbb R^{*+}\cup  \mathbb R^{*+}\times T_+(\sqrt{2}, -1).$ 
\end{enumerate}  
\end{examples}
Nevertheless, in dimension 1,    $V$ is monotone  hence Riemann integrable.  
In higher dimension, the  ``monotony" of $V$ inside the cone, given by Proposition \ref{HarF}, also yields its local Riemann integrability.  Let us explain this. 

By a classical Riemann's argument,  the local Riemann integrability of $V$ is equivalent to the fact that  the set of discontinuities of $V$ is Lebesgue negligible.
The proof we present here  is due to E. Lesigne \cite{Lesigne2021}  and is decomposed into 2 steps.

 Let ${\bf x}\in \cC$ be a point of discontinuity of $V$ and ${\bf A}$ the subset of $\mathcal C$ defined by 
\begin{equation}\label{SetA}
{\bf A}:=\{{\bf a}\in\mathcal C: {\bf a}+ \mathcal C \subset \mathcal C, \; \mathrm{dist}({\bf a}+\cC,  \partial \cC)>0\}.
\end{equation}

\noindent  \underline{Step1.} {\it There exists ${\bf a}\in {\bf A}$ such that $\bf x$   is  a  point of  discontinuity of  $V$ in the direction ${\bf a}$. }

By Assumption {\bf C},  the set ${\bf A}$ is open and not empty and by Proposition \ref{HarF} (ii) the function $V$ is monotonically increasing inside the cone with respect to the translation associated with a vector ${\bf a}\in {\bf A}$.  
 Hence,  there exist two sequences $({\bf u}_n)_{n \geq 0}$ and $({\bf v}_n)_{n \geq 0}$  which converge towards ${\bf x}$ and constants $l<L$ such that $V({\bf u}_n)\leq l<L\leq V({\bf v}_n)$ for any $n \geq 0$.
Since the set ${\bf A}$ is open,  there exist    vectors  ${\bf n}\neq 0$  and ${\bf b,  c  \in  A}$ such that  
the hyperplane $P:=(\bR {\bf n})^\perp  $ intersects ${\bf A}$ and   satisfy $\langle {\bf b},  {\bf n}\rangle >0$ and $\langle {\bf c},  {\bf n}\rangle <0$.
Without loss of generality,  we may assume  that,  for some open neighborhood  $U$ of ${\bf x}$,   all  the points ${\bf u}_n,  {\bf v}_n$ and their projections on the affine hyperplane ${\bf x}+P$  along the directions $ \bR{\bf b}$ and $ \bR{\bf c}$  belong to $V$. 

Now,  for each point ${\bf u} \in U$,  let us denote ${\bf u}+ \beta {\bf b}$ (resp. ${\bf u}+ \gamma {\bf c}$) its projection on ${\bf x} + P$ in the direction  ${\bf b}$ (resp. ${\bf c}$). By the hypothesis on the respective positions of ${\bf b}$ and ${\bf c}$ with respect to $P$,  it holds $\beta \gamma\leq 0$ with equality if and only if $\beta=\gamma=0$. We set 

$\bullet$ ${\bf u}^+={\bf u}+ \beta {\bf b}$ and 
${\bf u}^-={\bf u}+ \gamma {\bf c}$ when $\beta> 0$ and $\gamma< 0$;

$\bullet$    ${\bf u}^+={\bf u}+ \gamma {\bf c}$ and 
${\bf u}^-={\bf u}+ \beta {\bf b}$ when $\beta< 0$ and $\gamma> 0$.

\noindent By convention ${\bf u} ={\bf u}^+={\bf u}^-$ when $ \beta=\gamma=0$ (i.e. when ${\bf u} \in P)$.

 Then,  for all $n \geq 0, $ the points  ${\bf u}_n^-$ and ${\bf v}^+_n$ belong to $({\bf x} + P)\ \cap \ U$ and,  by monotony property,  they  satisfy
 \[
 V({\bf u}^-_n)\leq V({\bf u}_n)\leq l<L\leq V({\bf v}_n)\leq V({\bf v}^+_n).
 \]
Hence,  we replace $P$ by one of its hyperplanes $Q$ such that   $Q\ \cap \ {\bf A}\neq \emptyset$ and ${\bf b},  {\bf c}$ by vectors in $P\ \cap \ {\bf A}$ who do  not belong to the same half space of $P$ bounded by $Q$. After a   backward recursion with at most $(d-1)$-steps,   we prove the existence of some ${\bf a} \in {\bf A}$,  which does not depends on ${\bf x}$,  and sequences   $({\bf u}_n')_{n \geq 0}$ and $({\bf v}'_n)_{n \geq 0}$  in $\cC$  which converge to ${\bf x}$,  belong to the affine line ${\bf x}+\bR{\bf a}$   and satisfy  $
 V({\bf u}'_n) \leq l<L \leq V({\bf v}'_n)
 $ for any $n \geq 0$.
In other words,  ${\bf x}$ is  a  point of  discontinuity of  $V$ in the direction ${\bf a}$.

\noindent  \underline{Step 2.} \it The set $D_{\bf a}$ of points ${\bf x} \in \cC$ such that   $V$ is discontinuous at ${\bf x}$ in the direction ${\bf a}$ is negligible  with respect to the Lebesgue measure. }
 
Notice that  for any ${\bf x}\in \mathcal C$  the function $t\mapsto V({\bf x}+t{\bf a})$ is defined on a neighborhood of $0$ and that it is non decreasing on the half line  $({\bf x}+\bR{\bf a})\ \cap \ \ \cC$; hence, its discontinuities set is countable,  hence negligible with respect to the Lebesgue measure.

Fubini's theorem readily implies that $D_{\bf a}$ is negligible: indeed,  for any  open subset  $ V$ of the hyperplane $(\bR {\bf a})^\perp$ and  any $\varepsilon>0$ such that  
\[
V\times [\varepsilon,  \varepsilon]:= \{{\bf v}+t{\bf a}  \mid {\bf v} \in V,  t\in [-\varepsilon,  \varepsilon]\}\subset \cC, 
\]
it holds
\[
\int_{V\times [-\varepsilon,  \varepsilon]} 1_{D_{\bf a}}({\bf x}){\rm d} {\bf x} 
=\int_{V}\underbrace{\left(\int_{-\varepsilon}^{\varepsilon}1_{D_{\bf a}}({\bf v}+t{\bf a}){\rm d} t \right)}_{=0 }{\rm d} {\bf v} =0.
\]

\rightline{$\Box$}

 \subsection{Reversing time and key approximation lemma}\label{Subsection3.2}

The main idea in the  proof by D. Denisov and V. Wachtel  of  the local limit theorem for lattice random walks conditioned to stay in the cone $\cC$  is the  reversion of time.  Following \cite{DW2015}, we decompose  the $n$-steps trajectories  of the random walk  between sites ${\bf x}$ and ${\bf y}$  in $\cC$   into two parts. The first one starting from ${\bf x} $  remains  inside $\cC$ up to time $[n/2]$ while the second part of the trajectory  between time $[n/2]$ and time $n$ has the same distribution as the random walk (${\bf y}-S(n))_{0\leq n\leq [n/2]}$ conditioned to stay inside the cone  up to time $[n/2]$. 

This property relies on the notion of reversion of time that we now define by introducing the random time $\tilde{\tau}:\cC\rightarrow \mathbb N$   defined by
\[
\tilde \tau({\bf y}):= \inf\{n\geq 1\mid{\bf y}  -S(n) \notin \cC\}.
\]
 In the nonlattice case,  it is not relevant to fix exactly the position of the arrival  site ${\bf y}$; indeed,  trajectories of the random walk with a fixed arrival site at time $n$ belong to some set of measure $0$. This difficulty  can be overcome by taking a thicker arrival set. We thus introduce the following notations.

\begin{notation}  
For any  ${\bf x},  {\bf y} \in \bR^d$,  we set  $x^i:= \langle {\bf x},  e_i\rangle,  y^i:= \langle  {\bf y},  e_i\rangle$ for  $1\leq i\leq d$.
When $x^i<y^i$ for any $1\leq i\leq d$,  we set
\[
\llbracket {\bf x},  {\bf y} \rrbracket= [x^1,  y^1]\times \ldots \times [x^d,  y^d]
\quad   and  \quad \llbracket {\bf x},  {\bf y} \llbracket= [x^1,  y^1[\times \ldots \times [x^d,  y^d[.\]

\end{notation}
 By assumption {\bf C},  the set $\bf A $ of points ${\bf a}$ such that ${\bf a} +\cC\subset \cC$ has non empty interior. Note that all assumptions  {\bf C},  {\bf M},  {\bf N} and {\bf NL} and the conclusions of the main results of the paper (Theorem \ref{theostonecone} and Theorem \ref{local}) are preserved under a non singular linear transformation; in other words,  the cone $\cC$ and the random walk $S(n)$ may be  replaced by $T\cC $ and $ T S(n)$,  where $T$ is a  $d\times d$ invertible matrix. Then,  {\bf without loss of generality,  we can assume that
 ${\bf 1}:= e_1+\ldots + e_d$  belongs to $ \bf A $}. This implies in particular that there exists $\delta^*>0$ such that 
 \begin{equation}\label{Interialcone}
 \forall{\bf x}\in 
\llbracket (1-\delta^*){\bf 1}, (1+\delta^*){\bf 1} 
\rrbracket,  \quad {\bf x}+\cC\subset \cC.
 \end{equation}
For any $\delta>0$,   let us consider the cones $\mathcal C_{-\delta}$ and $\mathcal C_\delta$ such that 
$\mathcal C_{-\delta}\subset \mathcal C \subset \mathcal C_{\delta}$ where
\begin{itemize}
\item  $\cC_{-\delta}$ is the smallest set of the form $-t{\bf 1}+\cC $ containing  all the  boxes $\llbracket {\bf x},  {\bf x}+\delta{\bf 1}\rrbracket$ which intersect $\cC$;

\item   $\cC_{\delta}$ is the biggest set of the form $t{\bf 1}+\cC$ such that   all the  boxes $\llbracket {\bf x},  {\bf x}+\delta{\bf 1}\rrbracket$ which intersect  $t{\bf 1}+\cC$ are included in $\cC$.
\end{itemize}
\begin{figure}[h]
\centering
\setlength{\unitlength}{1mm}

\begin{picture}(120,65)(-35,-30)


\color{red}
\linethickness{3pt}
\put(-20,0){\line(4,1){90}}   
\put(-20,0){\line(4,-1){90}}  
\put(71,22){\color{red}$\mathcal C_{-\delta}$}

\color{black}
\linethickness{3pt}
\put(0,0){\line(4,1){70}}    
\put(0,0){\line(4,-1){70}}   
\put(71,16){$\mathcal C$}

\color{blue}
\linethickness{3pt}
\put(20,0){\line(4,1){50}}   
\put(20,0){\line(4,-1){50}}  
\put(71,10){\color{blue}$\mathcal C_\delta$}

\color{red}
\linethickness{1pt}
\put(12,2){\framebox(5,5){}}

\color{blue}
\linethickness{1pt}
\put(32,2){\framebox(5,5){}}

\end{picture}

\caption{Cones ${\color{blue} \mathcal C_{\delta}}\subset \mathcal C\subset {\color{red} \mathcal C_{-\delta}}$}
\centerline{Any box {\color{blue} $\Box$} which intersects {\color{blue} $\mathcal C_{\delta}$}  is included in $ \mathcal C$ 
 }
 \centerline{Any box {\color{red} $\Box$} which intersects $\mathcal C$   is included in {\color{red} $\mathcal C_{-\delta}$}  
 }
\end{figure} 
Let us be more precise. Let $I_\delta$ be the subset of $ \mathbb R^+$ defined by 
\[
I_\delta:= \{t>0\mid \llbracket {\bf x},  {\bf x}+\delta{\bf 1}\rrbracket \subset-t   {\bf 1}+\cC\   for\ any \ {\bf x} \in \bR^d \ such\  that \
\llbracket {\bf x},  {\bf x}+\delta{\bf 1}\rrbracket\ \cap \ \cC \neq \emptyset \},
\] 
we set  $t_\delta:= \inf I_\delta $  and, finally,
\begin{equation}\label{ThickerCone_2}
\cC_{-\delta}=-t_\delta    {\bf 1}+\cC\quad \hbox{and}\quad  \cC_{\delta}=t_\delta  {\bf 1}+\cC.
\end{equation}
Notice on the one hand that  $[{\delta\over \delta^*},  +\infty[\subset I_\delta$ (hence $I_\delta\neq \emptyset$) since for $t\geq\delta/\delta^* $,  it holds
\begin{equation} \label{jqcevgrh}
 \llbracket {\bf x}, {\bf x}+\delta{\bf 1}\rrbracket\ \cap \ \cC \neq \emptyset \quad \Longrightarrow \quad \llbracket {\bf x},  {\bf x}+\delta{\bf 1}\rrbracket\subset- t {\bf 1}+\cC.
\end{equation}
Indeed,  let ${\bf x}\in\bR^d$ such that $\llbracket {\bf x}, {\bf x}+\delta{\bf 1}\rrbracket\ \cap \ \cC \neq \emptyset$ and pick an arbitrary ${\bf y}\in \llbracket {\bf x}, {\bf x}+\delta{\bf 1}\rrbracket \ \cap\ \cC$. Then,  any element in $\llbracket {\bf x},  {\bf x}+\delta{\bf 1}\rrbracket$ may be decomposed as  ${\bf y}+{\bf z}$ where ${\bf z} \in \llbracket-\delta {\bf 1},  \delta {\bf 1}\rrbracket$. Hence, 
\[
 t {\bf 1}+ {\bf y}+{\bf z} = t \left({\bf 1}+ \frac{\bf y}{t }+\frac{\bf z}{t }\right)
\  {\rm with} \ 
  \frac{\bf y}{t } \in \mathcal C\  {\rm and}\ 
{\bf 1}+\frac{\bf z}{t } \in \llbracket (1- {\delta\over t} ){\bf 1},  (1+ {\delta\over t} ){\bf 1}\rrbracket.
\] 
The condition  $t\geq\delta/\delta^*$ yields  $\llbracket(1- { \delta\over t} ){\bf 1},  (1+ {\delta\over t} ){\bf 1}\rrbracket\subset 
\llbracket (1- \delta^*){\bf 1},  (1+ \delta^*){\bf 1}\rrbracket.
$ 
 This together with \eqref{Interialcone} implies that ${\bf 1}+\frac{\bf y}{t}+\frac{\bf z}{t}\in\cC$,  hence $t{\bf 1}+{\bf y}+{\bf z}\in\cC$. This proves \eqref{jqcevgrh}.
 
 On the other hand, $t_\delta \geq \delta/2$. Indeed, since the box $\llbracket-{\delta\over 2} {\bf 1},  {\delta\over 2} {\bf 1}\rrbracket\cap \mathcal C\neq \emptyset$, the condition  $t \in I_\delta$ yields  $t+\llbracket-{\delta\over 2} {\bf 1},  {\delta \over 2} {\bf 1}\rrbracket\subset \mathcal C$  thus $t > \delta/2$.
   
For short,  the exit times of the sets $\cC_{\delta}$ (respectively $\cC_{-\delta}$)  of  the processes $({\bf x}+S(n))_{n \geq 0} $ and $({\bf y}-S(n))_{n \geq 0} $ with ${\bf x, y} \in \mathcal C_\delta$  (respectively ${\bf x, y} \in \mathcal C_{-\delta}$)   are denoted $\tau_\delta({\bf x})$ and $\tilde \tau_\delta({\bf y}) $  (respectively $  \tau_{-\delta}({\bf x})$  and  $\tilde \tau_{-\delta}({\bf y}))$. When $\delta=0$,  we omit the index.
Similar statement as Proposition \ref{HarF} and Proposition \ref{exitTime} hold for sets $\cC_{\delta}$ and $\cC_{-\delta}$; the corresponding functions $V_{\delta},  \tilde V_{\delta},  V_{-\delta}$ and $\tilde V_{-\delta}$ are defined by $ V_\delta({\bf x}):= V({\bf x}+t_\delta{\bf 1}),    \tilde V_\delta({\bf y}):= \widetilde V({\bf y}+t_\delta{\bf 1}),   V_{-\delta}({\bf x}):= V({\bf x}-t_\delta{\bf 1})$ and  $\tilde V_{-\delta}({\bf y}):= \tilde V({\bf y}-t_\delta{\bf 1})$ respectively.

The following result establishes an approximation between exit time and reverse exit time for random walks in cone and plays a crucial in the sequel.

\begin{prop} \label{keyinclusionsGENERALversion}
Let  ${\bf x},  {\bf y} \in \mathcal C$ and $\delta,  \tilde \delta>0$.
Then,  for any  ${\bf z} \in \llbracket {\bf x},  {\bf x}+\delta{\bf 1}\rrbracket\;\cap\;\cC$,  it holds
\begin{equation}\label{key1}
\bP\left( \tau({\bf z}) >n,  {\bf z}  +S(n) \in\llbracket {\bf y},  {\bf y}+\widetilde\delta{\bf 1}\rrbracket \right)
\leq
\bP\left( \widetilde{\tau}_{-\widetilde\delta}({\bf y}) >n,  {\bf y}-S(n) \in \llbracket {\bf x}-\tilde \delta{\bf 1}, {\bf x}+\delta{\bf 1}\rrbracket\right).
\end{equation}

Conversely,  when  $ \widetilde\delta >\delta$ and ${\bf y}\in \mathcal C_{\tilde \delta}$,  it holds 
\begin{equation}\label{key2}
\bP\left(\widetilde{\tau}_{\widetilde\delta}({\bf y})>n,  {\bf y}-S(n) \in \llbracket {\bf x}-(\widetilde\delta-\delta){\bf 1}, {\bf x} \rrbracket\right)
\leq
\bP\left(\tau({\bf x}) >n,  {\bf x}  +S(n) \in\llbracket {\by},  {\by}+\widetilde\delta{\bf 1}\rrbracket\right).
\end{equation}
 
 \end{prop}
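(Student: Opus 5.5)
The plan is to use the classical time-reversal of the increments and then compare the two trajectories through the box geometry defining $\cC_{\pm\widetilde\delta}$. For fixed $n$, the vector $(X_1,\dots,X_n)$ has the same law as $(X_n,\dots,X_1)$. Setting $S'(k):=X_n+\dots+X_{n-k+1}$, we have $S'(n)=S(n)$ and $S(n)-S(n-k)=S'(k)$. So on any event, the reversed path ${\bf y}-S'(k)$ can be compared with ${\bf z}+S(n-k)$ via the identity
\[
{\bf z}+S(n-k)=({\bf z}+S(n))-S'(k).
\]
Every inclusion of events expressed in $(X_1,\dots,X_n)$ therefore transfers to probabilities computed with $S'$, which have the same law as those computed with $S$.

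For \eqref{key1}, work on the event $\{\tau({\bf z})>n,\ {\bf z}+S(n)\in\llbracket {\bf y},{\bf y}+\widetilde\delta{\bf 1}\rrbracket\}$. Write ${\bf w}:={\bf z}+S(n)={\bf y}+{\bf e}$ with ${\bf e}\in\llbracket {\bf 0},\widetilde\delta{\bf 1}\rrbracket$. Then for $1\le k\le n$,
\[
{\bf y}-S'(k)={\bf z}+S(n-k)-{\bf e}.
\]
The point ${\bf z}+S(n-k)$ lies in $\cC$: for $k<n$ because $\tau({\bf z})>n$, and for $k=n$ because ${\bf z}\in\cC$. Hence ${\bf y}-S'(k)$ lies in the box $\llbracket {\bf z}+S(n-k)-\widetilde\delta{\bf 1},\,{\bf z}+S(n-k)\rrbracket$, which meets $\cC$. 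By the definition of $t_{\widetilde\delta}$, and after checking that the infimum is attained (the box is closed and $\cC_{-\widetilde\delta}$ is open), this box lies in $\cC_{-\widetilde\delta}$. Therefore $\widetilde\tau_{-\widetilde\delta}'({\bf y})>n$. At $k=n$ we get ${\bf y}-S'(n)={\bf z}-{\bf e}\in\llbracket {\bf x}-\widetilde\delta{\bf 1},{\bf x}+\delta{\bf 1}\rrbracket$, since ${\bf z}\in\llbracket{\bf x},{\bf x}+\delta{\bf 1}\rrbracket$. Equality in law of $S'$ and $S$ then gives \eqref{key1}.

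For \eqref{key2}, the argument runs in the other direction. Work on $\{\widetilde\tau_{\widetilde\delta}({\bf y})>n,\ {\bf y}-S(n)\in\llbracket {\bf x}-(\widetilde\delta-\delta){\bf 1},{\bf x}\rrbracket\}$ and set ${\bf e}:={\bf x}-({\bf y}-S(n))\in\llbracket{\bf 0},(\widetilde\delta-\delta){\bf 1}\rrbracket$. Then
\[
{\bf x}+S'(j)={\bf y}-S(n-j)+{\bf e}.
\]
The point ${\bf y}-S(n-j)$ lies in $\cC_{\widetilde\delta}$ for $0\le j\le n$ (using ${\bf y}\in\cC_{\widetilde\delta}$ when $j=n$). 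So ${\bf x}+S'(j)$ lies in a box of side $\widetilde\delta$ containing a point of $\cC_{\widetilde\delta}$, and by the defining property of $\cC_{\widetilde\delta}$ this box is included in $\cC$. This gives $\tau'({\bf x})>n$. At $j=n$, ${\bf x}+S'(n)={\bf y}+{\bf e}\in\llbracket{\bf y},{\bf y}+\widetilde\delta{\bf 1}\rrbracket$.

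The main obstacle is the careful justification of the two box properties, namely that every closed box of side $\widetilde\delta$ meeting $\cC$ sits in $-t_{\widetilde\delta}{\bf 1}+\cC$, and that every such box meeting $t_{\widetilde\delta}{\bf 1}+\cC$ sits in $\cC$. These come from the definition of $I_{\widetilde\delta}$ together with a limiting argument at $t=t_{\widetilde\delta}$. That argument uses openness of $\cC$ and monotonicity of $I_{\widetilde\delta}$ in $t$, which holds because ${\bf 1}\in{\bf A}$. If attainment at the infimum is problematic, I would replace $t_{\widetilde\delta}$ by a slightly larger element of $I_{\widetilde\delta}$.
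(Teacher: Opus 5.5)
Your proof is correct and is essentially the paper's argument: a pathwise inclusion of events built on ${\bf z}+S(n-k)=({\bf z}+S(n))-S'(k)$ and the box properties of $\cC_{\pm\tilde\delta}$, followed by the equality in law of $(X_1,\dots,X_n)$ and $(X_n,\dots,X_1)$. You are in fact more careful than the paper about the time-zero terms (${\bf z}\in\cC$, ${\bf y}\in\cC_{\tilde\delta}$), where the paper's index range is off by one, and about $t_{\tilde\delta}\in I_{\tilde\delta}$, which the paper uses tacitly.

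On that last point, your parenthetical reason is not the right one. Compactness of the box and openness of the cone only make each single-box condition open in $t$, so they do not by themselves give attainment of the infimum. What works is the shift you allude to at the end:
\begin{itemize}
\item If a box $Q$ meets $\cC$, then so does $Q-\epsilon{\bf 1}$ for small $\epsilon>0$.
\item Hence $Q+(t-\epsilon){\bf 1}\subset\cC$ for every $t\in I_{\tilde\delta}$.
\item Choosing such a $t<t_{\tilde\delta}+\epsilon$ and using $s{\bf 1}+\cC\subset\cC$ for $s>0$ gives $Q+t_{\tilde\delta}{\bf 1}\subset\cC$.
\end{itemize}
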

{\bf Proof.}
Let us prove inequality \eqref{key1}. Let $\omega\in\Omega$ and $\bf z \in \mathcal C$  such that
\[
{\bf z}+ S(1)(\omega)\in\cC, \dots, {\bf z}+ S(n-1)(\omega)\in\cC,  {\bf z}+ S(n)(\omega)\in\cC\cap \llbracket {\bf y},  {\bf y}+\widetilde\delta{\bf 1}\rrbracket.
\]
From the facts that ${\bf z} \in \llbracket {\bf x},  {\bf x}+\delta{\bf 1}\rrbracket$ and $ {\bf z}  +S(n)(\omega) \in\llbracket {\bf y},  {\bf y}+\widetilde\delta{\bf 1}\rrbracket\;\cap\;\cC$ we derive that
\begin{equation}\label{New_Eq1}
{\bf y} -S(n)(\omega) \in  \llbracket {\bf x}-\widetilde\delta{\bf 1}, {\bf x}+\delta{\bf 1}\rrbracket.
\end{equation}
For $1\leq k\leq n$,  the property  $ {\bf z} +S_k(\omega)\in \cC$ may be rewritten as
\[
 {\bf z}  +S(n)(\omega) - (X_{k+1}(\omega) +\cdots+X_n(\omega))\in \cC;\]
which together with  the fact that $ {\bf z}  +S(n)(\omega) \in\llbracket {\bf y},  {\bf y}+\widetilde\delta{\bf 1}\rrbracket$ implies that
\[
\llbracket{\bf y}-(X_{k+1}(\omega)+\ldots +X_n(\omega)),  {\bf y}-(X_{k+1}(\omega)+\ldots +X_n(\omega))+\widetilde\delta{\bf 1}\rrbracket \, \cap\,  \cC\not=\emptyset, 
\]
thus 
$
{\bf y}-(X_{k+1}(\omega)+\ldots +X_n(\omega))\in \cC_{-\widetilde\delta}$ for $1\leq k\leq n$,  by  definition of  $\cC_{-\widetilde\delta}$.  
By combining this property with \eqref{New_Eq1},  we obtain %
\begin{align*}
\Bigl(\tau({\bf x}) >n,  {\bf x}  +S(n) \in\llbracket {\bf y},  {\bf y}+\widetilde\delta{\bf 1}\rrbracket\Bigr)\subset \Bigl({\bf y}-(X_{k+1} +\ldots +X_n ) &\in \cC_{-\widetilde\delta},  1\leq k  \leq n, 
 \\
 &\cap\ {\bf y}-S_{n} \in \llbracket {\bf x}-\widetilde\delta{\bf 1}, {\bf x}+\delta{\bf 1}\rrbracket\Bigr).
\end{align*}
Since the vectors $(X_1,  \ldots,  X_n)$ and $ (X_n,  \ldots,  X_1)$ have the same distribution,  this readily implies
\begin{eqnarray*}
&& \bP\left(\tau({\bf x}) >n,  {\bf x}  +S(n) \in\llbracket {\bf y},  {\bf y}+\widetilde\delta{\bf 1}\rrbracket \right)\\[1ex]
&\leq&
 \bP \left({\bf y}-(X_{k+1} +\ldots +X_n )\in \cC_{-\widetilde\delta}\ {\rm for} \  1\leq k  \leq n,   {\bf y}-S(n) \in \llbracket {\bf x}-\widetilde\delta{\bf 1}, {\bf x}+\delta{\bf 1}\rrbracket\right)\\[1ex]
 &=&
\bP \Bigl( {\bf y}-S_k\in \mathcal C_{-\widetilde\delta}\ {\rm for} \ 1\leq k  \leq d,  {\bf y}-S(n) \in \llbracket {\bf x}-\widetilde\delta{\bf 1}, {\bf x}+\delta{\bf 1}\rrbracket)\Bigr)\\[1ex]
&=&
\bP\left(\widetilde{\tau}_{-\widetilde\delta}({\bf y})>n,  {\bf y} -S(n) \in \llbracket {\bf x}-\widetilde\delta{\bf 1}, {\bf x}+\delta{\bf 1}\rrbracket\right)
\end{eqnarray*}
and \eqref{key1} follows.

\noindent
Similarly,  for $\tilde \delta>\delta$ and ${\bf y}\in \mathcal C_{\tilde \delta}$,  it holds 
\begin{eqnarray*}
&& \bP\Bigl(\tilde{\tau}_{\tilde \delta}({\bf y})>n,   {\bf y}-S(n) \in \llbracket {\bf x}-(\tilde \delta-\delta){\bf 1}, {\bf x} \rrbracket\Bigr)\\[1ex]
&=&
\bP \Bigl({\bf y}-(X_{k+1} +\ldots +X_n )\in \cC_{\widetilde\delta}\ {\rm for} \  0\leq k  \leq n, {\bf y}-S(n) \in \llbracket {\bf x}-(\widetilde\delta-\delta){\bf 1}, {\bf x} \rrbracket\;\Bigr).
\end{eqnarray*}
Let $\omega \in \Bigl({\bf y}-(X_{k+1} +\ldots +X_n )\in \cC_{\widetilde\delta}\ {\rm for} \  0\leq k  \leq n, {\bf y}-S(n) \in \llbracket {\bf x}-(\widetilde\delta-\delta){\bf 1}, {\bf x} \rrbracket\Bigr)$.
From the condition ${\bf y}-S(n)(\omega)\in \llbracket {\bf x}-(\widetilde\delta-\delta){\bf 1}, {\bf x} \rrbracket$ we derive that
\begin{equation}\label{New_Eq2}
 {\bf x} +S(n)(\omega) \in \llbracket {\bf y},  {\bf y}+(\widetilde\delta-\delta){\bf 1}\rrbracket\subset  \llbracket {\bf y},  {\bf y}+\widetilde\delta{\bf 1}\rrbracket.
\end{equation}
Furthermore,   for $0\leq k\leq n$, the conditions    ${\bf y}-S(n)(\omega)\in \llbracket {\bf x}-(\widetilde\delta-\delta){\bf 1}, {\bf x} \rrbracket$ and       
$$
{\bf y}-(X_{k+1}(\omega)+\ldots +X_n(\omega) )= \underbrace{{\bf y}-S(n)(\omega)}_{\in \llbracket {\bf x}-(\widetilde\delta-\delta){\bf 1}, {\bf x} \rrbracket}+S_k(\omega)\in\cC_{\widetilde\delta},  
$$
 combined with  the definition of $\cC_{-\widetilde\delta}$,    yield  
$
\llbracket {\bf x}-(\widetilde\delta-\delta){\bf 1}+S_k(\omega), {\bf x} +S_k(\omega)\rrbracket\subset \mathcal C
$  
and in particular
$ {\bf x} +S_k(\omega) \in \cC$. Hence,  for this choice of $\omega$ we have $\tau({\bf x})>n$ and this together with \eqref{New_Eq2} shows that $\omega\in \Bigl(\tau({\bf x}) >n,  {\bf x}  +S(n) \in\llbracket {\bf y},  {\bf y}+\widetilde\delta{\bf 1}\rrbracket\Bigr)$. Thus,  we obtain the desired result.
 $\Box$

\subsection{Some useful estimates}\label{Subsection3.3}
 We   state here preparatory overestimations on the local behavior of the random walk $(S(n))_{n \geq 0}$, providing  useful uniform bounds, this is a crucial step towards achieving the announced results. 
\begin{lemma} 
For any $\delta >0$ and   ${\bf x}\in \cC$,  there exist  positive  constants $C({\bf x},  \delta)$ such that  for all  $n\geq 1$ and ${\bf y}  \in \cC$, 
\begin{equation}\label{estimate1} 
  \bP(\tau({\bf x}) >n,  {\bf x}  +S(n) \in\llbracket {\bf y},  {\bf y}  +\delta {\bf 1}\rrbracket)
\quad \leq\quad {C({\bf x},  \delta)\over n^{p/2+d/2}}. 
\end{equation} 
There exist positive constants $c=c(\delta)$ and $C=C(\delta) $ such that  for every $n \geq 1,  t>0$,    and  $ {\bf x},  {\bf y}  \in \cC$,  
 
\begin{align}\label{estimate}
 &(a)\  \bP( {\bf x} +S(n)\in\llbracket {\bf y},  {\bf y}  +\delta {\bf 1}\rrbracket) \leq {C \over n^{d/2}},  \notag
 \\
 &(b)\   \bP( {\bf x} +S(n)\in\llbracket {\bf y},  {\bf y}  +\delta {\bf 1}\rrbracket)
 \leq {C \over n^{d/2}} e^{-c t^2} \quad when \quad  \vert {\bf x}-{\bf y}\vert > t \sqrt{n}, 
\\
&(c)\ 
\bP(\tau({\bf x}) \leq n,  {\bf x}  +S(n) \in\llbracket {\bf y},  {\bf y}  +\delta {\bf 1}\rrbracket)
 \leq {C \over n^{d/2}} e^{-c t^2}\quad when \quad {\rm dist}({\bf x},  \partial \mathcal C),  {\rm dist}({\bf y},  \partial \mathcal C)> t \sqrt{n}.\notag
 \end{align}
\end{lemma}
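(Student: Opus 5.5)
We prove the four bounds in the order (a), (b), (c), \eqref{estimate1}, since each uses the previous ones.

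\emph{Bound (a).} This is the classical uniform concentration bound behind Stone's theorem. Fix a nonnegative smoothing kernel $h$ whose Fourier transform $\hat h$ has compact support, with $h\geq 1$ on a box of side $\delta$. Then
\[
\bP({\bf x}+S(n)\in \llbracket {\bf y},{\bf y}+\delta{\bf 1}\rrbracket)\le \bE\, h({\bf x}+S(n)-{\bf y})=(2\pi)^{-d}\int \hat h(\theta)\,\hat\mu(\theta)^n e^{i\langle\theta,{\bf x}-{\bf y}\rangle}\,{\rm d}\theta .
\]
We bound this by $\int |\hat h|\,|\hat\mu|^n$, which does not depend on ${\bf x},{\bf y}$. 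By {\bf N}, $|\hat\mu(\theta)|\le e^{-c|\theta|^2}$ near ${\bf 0}$. By the nonlattice assumption ({\bf A}), $|\hat\mu|$ is bounded away from $1$ on the compact set ${\rm supp}\,\hat h\setminus\{|\theta|<\eta\}$. Together these give $O(n^{-d/2})$.

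\emph{Bound (b).} For bounded $t$, (b) follows from (a), so we may take $t$ large. Set $m=[n/2]$. If ${\bf x}+S(n)$ lands in the box while $|{\bf x}-{\bf y}|>t\sqrt n$ (and $t\sqrt n\geq 2\delta\sqrt d$), then either $|S(m)|> t\sqrt n/3$ or $|S(n)-S(m)|>t\sqrt n/3$.

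In the first case, condition on $\cF_m$ and apply (a) to the independent walk $S(n)-S(m)$ over the $n-m\ge n/2$ remaining steps. This gives the bound $Cn^{-d/2}\,\bP(|S(m)|>t\sqrt n/3)$. The second case is symmetric: condition instead on the last $n-m$ increments and apply (a) to $S(m)$. The problem is thus reduced to a deviation bound for $\bP(|S(m)|>t\sqrt n/3)$.

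This deviation bound is the step I expect to be the main obstacle. Under {\bf M} alone there is no exponential moment, so I would use a Fuk--Nagaev inequality with truncation at level $t\sqrt n$. It gives $e^{-ct^2}$ plus a remainder $n\bP(|X|>ct\sqrt n)$. That remainder has to be checked to be admissible where the lemma is applied, or else absorbed; for instance, on $t\le A\sqrt{\log n}$ one can compare it with $e^{-ct^2}$ by adjusting $c$.

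\emph{Bound (c).} Split according to whether $\tau({\bf x})\le n/2$ or $n/2<\tau({\bf x})\le n$.

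On $\{\tau({\bf x})=k\le n/2\}$, the point ${\bf z}={\bf x}+S(k)$ lies outside $\cC$, so $|{\bf z}-{\bf y}|\ge {\rm dist}({\bf y},\partial\cC)>t\sqrt n$. By the strong Markov property at $\tau$ and (b) over the $n-k\ge n/2$ remaining steps, this part is bounded by $C n^{-d/2}e^{-ct^2}$ (after adjusting $c$).

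On $\{n/2<\tau({\bf x})\le n\}$, reverse time exactly as in the proof of Proposition \ref{keyinclusionsGENERALversion}. Reversing the increments, the walk ${\bf y}-S(\cdot)$ leaves $\cC_{-\delta}$ before time $n/2$ and ends in $\llbracket {\bf x}-\delta{\bf 1},{\bf x}\rrbracket$. Since ${\rm dist}({\bf y},\partial\cC_{-\delta})\ge {\rm dist}({\bf y},\partial\cC)>t\sqrt n$ and ${\rm dist}({\bf x},\partial \cC)>t\sqrt n$, the same argument applies to the reversed walk. Here we use that (b) holds equally for $-S$.

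\emph{Bound \eqref{estimate1}.} With $m=[n/2]$, the Markov property at time $m$ gives
\[
\bP(\tau({\bf x})>n,\ {\bf x}+S(n)\in\llbracket{\bf y},{\bf y}+\delta{\bf 1}\rrbracket)\le \bP(\tau({\bf x})>m)\,\sup_{{\bf z}}\bP({\bf z}+S(n-m)\in\llbracket{\bf y},{\bf y}+\delta{\bf 1}\rrbracket).
\]
By Proposition \ref{exitTime}, the first factor is at most $K(1+|{\bf x}|^p)m^{-p/2}$. By (a), the second factor is at most $C(n-m)^{-d/2}$. Hence one can take $C({\bf x},\delta)=C'(1+|{\bf x}|^p)$, and the bound is uniform in ${\bf y}$.
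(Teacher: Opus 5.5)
The gap is in (b), and it cannot be closed. Your proofs of (a) and of the first estimate are correct. For (a) you give a direct Fourier-smoothing proof where the paper only quotes Stone's theorem; the first estimate uses the Markov property at $\lfloor n/2\rfloor$, Proposition~\ref{exitTime} and (a), exactly as the paper does. Your outline for (c) (strong Markov property on $\{\tau({\bf x})\le n/2\}$, time reversal on $\{n/2<\tau({\bf x})\le n\}$) is also the paper's.

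\textbf{Part (b) is false as stated, and so is (c).} The obstacle you flagged is real: under {\bf M} alone, (b) and hence (c) do not hold. Take $d=1$, $\cC=(0,\infty)$, and $X$ symmetric with density $\asymp|s|^{-4}$ at infinity, scaled to variance $1$. Then {\bf C}, {\bf M} (with $\alpha=5/2$), {\bf N} and {\bf A} hold. Set ${\bf x}=1$, ${\bf y}=n$, $t=\sqrt n/2$. On the event $\{|X_2+\dots+X_n|\le 2\sqrt n\}$, a single jump $X_1$ of size about $n$ gives $\bP(1+S(n)\in[n,n+\delta])\ge c\,\delta\, n^{-4}$. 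This is not $O(n^{-1/2}e^{-cn/4})$. For (c), take ${\bf x}={\bf y}=n$: one jump of size about $-2n$ followed by one of about $+2n$ defeats it in the same way.

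\textbf{The paper's proof has the same defect.} It calls (b) an easy consequence of the local limit theorem. Stone's theorem only gives $n^{d/2}\bP({\bf x}+S(n)\in\llbracket{\bf y},{\bf y}+\delta{\bf 1}\rrbracket)\le C\delta^d e^{-t^2/2}+o(1)$, with $o(1)$ uniform in ${\bf x},{\bf y}$ but not dominated by $e^{-ct^2}$. That is a statement for fixed $t$ as $n\to\infty$.

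\textbf{What you can prove instead, and why it suffices.} The fixed-$t$ form is all that is used later: in the proof of Theorem~\ref{theostonecone}, $t=A$ or $t\asymp\epsilon^{-1/2}$ stays fixed while $n\to\infty$. Your own route gives a uniform version of it. Combine your splitting at $m$ with the Fuk--Nagaev inequality $\bP(|S(m)|\ge r)\le C\,m\,\bE|X|^\alpha r^{-\alpha}+Ce^{-cr^2/m}$, which is available because $\alpha>2$. This yields
\[
\bP\bigl({\bf x}+S(n)\in\llbracket{\bf y},{\bf y}+\delta{\bf 1}\rrbracket\bigr)\le \frac{C}{n^{d/2}}\bigl(e^{-ct^2}+t^{-\alpha}n^{1-\alpha/2}\bigr)\quad\text{when } |{\bf x}-{\bf y}|>t\sqrt n ,
\]
and (c) follows with the same extra term. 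The extra term vanishes as $n\to\infty$ for fixed $t$. So state and prove (b) and (c) in this corrected form, rather than trying to absorb the remainder into $e^{-ct^2}$.

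\textbf{In (c), the time reversal uses the wrong cone.} Write $R_k=X_{k+1}+\dots+X_n$. From ${\bf x}+S(k)\notin\cC$ and ${\bf x}+S(n)\in\llbracket{\bf y},{\bf y}+\delta{\bf 1}\rrbracket$, you only learn that the box $\llbracket{\bf y}-R_k,{\bf y}-R_k+\delta{\bf 1}\rrbracket$ is not contained in $\cC$. By the definition of $\cC_\delta$, this gives ${\bf y}-R_k\notin\cC_\delta$, the thinner cone. It does not give ${\bf y}-R_k\notin\cC_{-\delta}$, since the corner may lie in $\cC\subset\cC_{-\delta}$. Your event is therefore smaller than the true one, and the claimed inclusion fails.

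With $\cC_\delta$, as in the paper, the exit point ${\bf z}'\notin\cC_\delta=t_\delta{\bf 1}+\cC$ of the reversed walk only satisfies $|{\bf z}'-{\bf x}|>t\sqrt n-t_\delta\sqrt d$. This constant loss is harmless: when $t\sqrt n\le 2t_\delta\sqrt d$, use (a) directly. After that, your strong-Markov argument for the reversed walk goes through with the corrected (b) applied to $-S$.
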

 Notice that   \eqref{estimate1}  is not optimal,  as we can see by comparing    with \eqref{local}; this first estimation is needed  in the proof  of Theorem \ref{theolocal}. Likewise, the  inequality \eqref{estimate}$(c)$  may seem surprising at first glance  since it concerns the local behaviour in the cone of the random walk at time $n$ when the walk is allowed to leave the cone before that time; it is useful in  Step 3 of the proof of  Theorem \ref{theostonecone}, see Section \ref{sectionproofTheo1}. 

\noindent {\bf Proof.} These inequalities correspond   to Lemmas 27,  28 and 29 in \cite{DW2015}. Assertions  (\ref{estimate}$(a)$) and  (\ref{estimate}$(b)$) are easy consequences of the classical local central limit theorem for random walks on  $\bR^d$ (see \cite{stone1965}). 
\\
Assertion (\ref{estimate1}) follows immediately,  combining  (\ref{estimate}$(a)$)  with  \cite{DW2015} where it is proved that,  uniformly in ${\bf x}\in \cC$, 
 \[\bP( \tau{(\bf x )}>n)\preceq {V({\bf x})\over n^ {p/2}}.\]
 \\
  To prove assertion (\ref{estimate}$(c)$),   we note that,  if ${\rm dist}({\bf y},  \partial \mathcal C)> t \sqrt{n}  $  then,  by using the Markov property,  
\begin{align*}
 & \bP(\tau({\bf x}) \leq n/2,  {\bf x}  +S(n) \in\llbracket {\bf y},  {\bf y}  +\delta {\bf 1}\rrbracket)\\
 &\qquad =\sum_{\ell=1}^{\lfloor n/2\rfloor }
  \bP(\tau({\bf x}) =\ell,  {\bf x} + S(n)  \in\llbracket {\bf y},  {\bf y}  +\delta {\bf 1}\rrbracket)
 \\
 &\qquad \leq \sum_{\ell=1}^{\lfloor n/2\rfloor } \int_{\cC^c}
  \bP(\tau({\bf x}) =\ell,  {\bf x}  +S(\ell)\in {\rm d}{\bf z})\bP({\bf z}+S(n-\ell) \in\llbracket {\bf y},  {\bf y}  +\delta {\bf 1}\rrbracket)
  \\
 &\qquad  \leq \max_{n/2\leq k\leq n} \sup_{\vert {\bf y}-{\bf z}\vert >t\sqrt{n}}\bP({\bf z}+S(k)\in \llbracket {\bf y},  {\bf y}  +\delta {\bf 1}\rrbracket)
 \\
 &\qquad \preceq {e^{-ct^2}\over n^{d/2}}, \quad {\rm for \ some \ constant }\ c>0,  \ {\rm by } \ (\ref{estimate}(b)).
 \end{align*}
To deal with the term $\bP(n/2<\tau({\bf x}) \leq n,  {\bf x}  +S(n) \in\llbracket {\bf y},  {\bf y}  +\delta {\bf 1}\rrbracket)
$,   we first apply the same trick as in Proposition \ref{keyinclusionsGENERALversion},   reversing time. Let $\omega \in \bigl(
n/2<\tau({\bf x}) \leq n,  {\bf x}  +S(n) \in\llbracket {\bf y},  {\bf y}  +\delta \rrbracket\bigr)$; it holds ${\bf y}  -S(n)(\omega) \in \llbracket {\bf x}-\delta,  {\bf x}\rrbracket$ and there exists $n/2< k\leq n$ such that ${\bf x}+S(k)(\omega)\notin \mathcal C$,  ie ${\bf x}+S(n)(\omega)-(X_{k+1}(\omega)+\cdots+X_n(\omega))\notin \mathcal C $. Hence
$$
\llbracket {\bf y} -(X_{k+1}(\omega)+\cdots+X_n(\omega)),  {\bf y} -(X_{k+1}(\omega)+\cdots+X_n(\omega))\rrbracket \not\subset \mathcal C.
$$
By definition of $\cC_{\delta}$,  this implies  $\llbracket {\bf y} -(X_{k+1}(\omega)+\cdots+X_n(\omega)),  {\bf y} -(X_{k+1}(\omega)+\cdots+X_n(\omega))\rrbracket \cap  \mathcal C_\delta =\emptyset $ and in particular ${\bf y} -(X_{k+1}(\omega)+\cdots+X_n(\omega))\notin \mathcal C_\delta. $   Consequently 
 \begin{align*}
 & \bP(n/2<\tau({\bf x}) \leq n,  {\bf x}  +S(n) \in\llbracket {\bf y},  {\bf y}  +\delta {\bf 1}\rrbracket)
 \\
 &\qquad \leq 
 \bP(\exists n/2<k\leq n\mid {\bf y} -(X_{k+1} +\cdots+X_n)\notin \mathcal C_\delta,   {\bf x}  +S(n) \in\llbracket {\bf y},  {\bf y}  +\delta {\bf 1}\rrbracket)
 \\
 &\qquad \leq \bP( \tilde \tau_\delta({\bf y}) \leq n/2,  {\bf y}  -S(n) \in\llbracket {\bf x}-\delta,  {\bf x}  \rrbracket)
 \\
 &\qquad \preceq {e^{-ct^2}\over n^{d/2}}, \quad {\rm by\ the \ same \ argument\ as \ above, \ assuming } \ {\rm dist}({\bf x},  \partial \mathcal C)> t \sqrt{n}.  \end{align*}
\rightline{$\Box$}

\section{Proof of Theorem \ref{theostonecone}}\label{sectionproofTheo1}

 As already mentioned in the introduction just befor the statement of Theorem \ref{theostonecone}, we follow here the proof of \cite[Theorem 1.5]{DW2015} using in a decisive manner Proposition \ref{keyinclusionsGENERALversion}.

 By a classical argument in measure theory, we may assume ${\bf B}:=\llbracket {\bf y}, {\bf y}+\delta{\bf 1}\rrbracket $ with ${\bf y} \in \mathcal C$ and $\delta >0$. 
 We adapt the proof of Theorem 5 in \cite{DW2015} and insist on the main differences. We fix two positive constants $A,  \epsilon$ and split the cone into three parts 
\begin{align*}
\cC^{(1)}&:= \{{\bf y}  \in \cC\mid \vert {\bf y}\vert >A\sqrt{n}\}, \\
\cC^{(2)}&:= \{{\bf y}  \in \cC\mid  \vert {\bf y}\vert \leq A\sqrt{n},  {\rm dist}({\bf y},   \partial \cC)\leq 2\epsilon \sqrt{n}\}, \\
\cC^{(3)}&:=\{{\bf y}  \in \cC\mid \vert {\bf y}\vert \leq A\sqrt{n},  {\rm dist}({\bf y},   \partial \cC)>2\epsilon \sqrt{n}\}.  \end{align*}
The proof is decomposed into 3 steps:
 
\noindent Step 1. 
\begin{equation*}
\lim_{A \to +\infty}\limsup_{n \to +\infty}
n^{p/2+d/2}  \sup_{{\bf y}  \in \cC^{(1)}}\bP( \tau({\bf x})>n,  {\bf x}  +S(n) \in  \llbracket {\bf y}, {\bf y}+\delta{\bf 1}\rrbracket)=0.
\end{equation*}
\noindent Step 2. For any $A>0$,  
\begin{equation*}
\lim_{\epsilon \to 0}\limsup_{n \to +\infty}
n^{p/2+d/2}  \sup_{{\bf y}  \in \cC^{(2)}}\bP(\tau({\bf x})>n,  {\bf x}  +S(n) \in  \llbracket {\bf y}, {\bf y}+\delta{\bf 1}\rrbracket)=0.
\end{equation*}
\noindent Step 3. 
\begin{equation*}
 \lim_{\epsilon \to 0}\limsup_{n \to +\infty} \sup_{{\bf y}  \in \cC^{(3)}}  \Big\vert n^{p/2+d/2}\bP(\tau({\bf x})>n,   {\bf x} +S(n)\in  \llbracket {\bf y}, {\bf y}+\delta{\bf 1}\rrbracket)-c\ V({\bf x})\  u({\bf y}/\sqrt{n}) e^{-\vert {\bf y}\vert^2/2n}\ \delta^d\Big\vert  =0.
\end{equation*}
 Notice that the factor $\delta^d$ equals the volume of the box $\llbracket {\bf y}, {\bf y}+\delta{\bf 1}\rrbracket$, 
this links to the  statement  of Theorem \ref{theostonecone} with ${\bf B}= \llbracket {\bf y}, {\bf y}+\delta{\bf 1}\rrbracket$. 

\
 
In steps 1 and 2, we  set $m=\lfloor n/2\rfloor$.

\ 

\noindent \underline{Proof of Step 1.} Let ${\bf x} \in \cC$ and ${\bf y}  \in \cC^{(1)}$. 

We decompose $\bP( \tau({\bf x})>n,   {\bf x} +S(n)\in  \llbracket {\bf y}, {\bf y}+\delta{\bf 1}\rrbracket)$ as $P_1+P_2$ with 
\[
P_1=\bP(\tau({\bf x})>n,  \vert  {\bf x} + S(m)\vert \leq A\sqrt{m},   {\bf x} +S(n)\in  \llbracket {\bf y}, {\bf y}+\delta{\bf 1}\rrbracket)
\]
and
\[
P_2=\bP(\tau({\bf x})>n,  \vert  {\bf x} +S(m)\vert >A\sqrt{m},   {\bf x} +S(n)\in  \llbracket {\bf y}, {\bf y}+\delta{\bf 1}\rrbracket).
\]
By the Markov property,  inequality   (\ref{estimate}($b$)) and   Proposition \ref{exitTime}, 
\begin{align}\label{estimP1}
P_1 &\leq\bP(\tau({\bf x}) > m,  \vert  {\bf x} +S(m)\vert \leq A\sqrt{m},    {\bf x} +S(n) \in  \llbracket {\bf y},  {\bf y}  + \delta {\bf 1}\rrbracket)\notag\\
&\leq\bP(\tau({\bf x}) >m)\sup_{\vert {\bf y}-{\bf z}\vert>A\sqrt{m}}\bP(   {\bf z}+S(n-m)\in \llbracket {\bf y},  {\bf y}  + \delta {\bf 1}\rrbracket)
\notag\\
&\preceq n^{-p/2-d/2}e^{-cA^2/2 }\end{align}
for some $c>0$.   Similarly,  by  Proposition \ref{exitTime}, 
\begin{align}\label{estimP2}
P_2 &\leq \bP(\tau({\bf x})>m,  \vert  {\bf x} +S(m)\vert >A\sqrt{m},   {\bf x} +S(n)\in  \llbracket {\bf y}, {\bf y}+\delta{\bf 1}\rrbracket)\notag
\\
&\leq \bP\left(\vert  {\bf x} +S(m)\vert >A\sqrt{m}\Big\vert \tau({\bf x})>m\right)
\underbrace{\bP(\tau({\bf x}) >m)}_{\preceq n^{-p/2}}
\underbrace{\sup_{{\bf z} \in \cC}\bP\left( {\bf z}+S(n-m)\in \llbracket {\bf y},  {\bf y}  + \delta {\bf 1}\rrbracket\right)}_{\preceq n^{-d/2}}
\notag\\
 &\preceq n^{-p/2-d/2}\mu\{{\bf z}\in  \cC: \vert {\bf z}\vert \geq A\}.
 \end{align}
 Hence, 
by  combining  (\ref{estimP1}) and (\ref{estimP2}) and by using   Proposition \ref{exitTime},  we obtain
\[x
n^{p/2+d/2}\bP(\tau({\bf x})>n,   {\bf x} +S(n)\in  \llbracket {\bf y}, {\bf y}+\delta{\bf 1}\rrbracket )\preceq e^{-cA^2/2 }+\mu\{{\bf z}\in  \cC: \vert {\bf z}\vert \geq A\} \longrightarrow 0 \ {\rm as} \ A \to +\infty.
\]
 
 \noindent \underline{Proof of Step 2.}  Let ${\bf x} \in \mathcal C,     {\bf y}  \in \cC^{(2)}$,  fix $\delta\in ]0,  1]$ and set $\delta \mathbb Z^d= \delta \bZ e_1\oplus\ldots \oplus \delta \bZ e_d$.
The Markov property and Proposition  \ref{keyinclusionsGENERALversion}  yield
   \begin{align*}
 \bP( \tau({\bf x})>n,   &{\bf x} +S(n)\in  \llbracket {\bf y}, {\bf y}+\delta{\bf 1}\rrbracket) 
\\
 & \leq \sum_{{\bf z}\in  \delta \mathbb Z^d}
\bP( \tau({\bf x})>n,   {\bf x} +S(m) \in\llbracket {\bf z},   {\bf z}+\delta {\bf 1}\rrbracket,   {\bf x} +S(n)\in \llbracket {\bf y},  {\bf y}  + \delta {\bf 1}\rrbracket)
\end{align*}
\begin{align*}
&\qquad  \leq \sum_{{\bf z}\in  \delta \mathbb Z^d}
\bP( \tau({\bf x})>m,   {\bf x} +S(m) \in\llbracket {\bf z},   {\bf z}+\delta {\bf 1}\rrbracket)\\
&\qquad \qquad \qquad \qquad 
\times \sup_{{\bf z}'\in\llbracket {\bf z},   {\bf z}+\delta {\bf 1}\rrbracket}\bP(\tau( {\bf z}')>n-m,   {\bf z}'+S(n-m) \in \llbracket {\bf y},  {\bf y}  + \delta {\bf 1}\rrbracket)
\\
& \qquad   \preceq
{C({\bf x})\over n^{p/2+d/2}}  \sum_{{\bf z}\in  \delta \mathbb Z^d} 
 \bP(\tilde{\tau}_{-\delta}({\bf y})>n-m,  {\bf y} - S(n-m) \in\llbracket {\bf z}-\delta{\bf 1},   {\bf z}+\delta {\bf 1}\rrbracket)\\
 &\qquad \qquad\qquad \qquad\qquad \qquad {\rm by\ inequalities \ }   \eqref{key1} \ {\rm and}\  \eqref{estimate1}, \  
\\
& \qquad  \preceq
{C({\bf x})\over n^{p/2+d/2}}  \   \bP(\tilde{\tau}_{-\delta}({\bf y})>n-m).
\end{align*}
 It remains to check that 
 $\displaystyle 
 \limsup_{n \to +\infty} \sup_{{\bf y}  \in \cC^{(2)}}\bP(\tilde{\tau}_{-\delta}({\bf y})>n)=o(\epsilon)
 $
  uniformly in ${\bf y}  \in \cC^{(2)}$;  for the sake of completeness,  we present here the argument,   following  the proof of formula (78) in   \cite{DW2015}. Firstly,  
   the tail of the distribution of $\tilde \tau_{-\delta}({\bf y})$ may be compared with  the one of the exit time $\tau^{bm}({\bf y}')$ of the cone $\cC $ for  the standard Brownian motion starting from $ {\bf y} '$, for a suitable choice of $ {\bf y} '$: indeed, it holds,  for some positive constant $c$,  
   \[
   \bP(\tilde \tau_{-\delta}({\bf y})>n-m)\leq \bP(\tau^{bm}({\bf y} +c\epsilon \sqrt{n} {\bf 1} )>n-m)+o(n^{-r})
   \]
   where $r>0$ and $o(n^{-r})$ is uniform in ${\bf y}$. Consequently,  by the scaling property of the Brownian motion (see (79) in \cite{DW2015}),  then the parabolic boundary Harnack principle, 
\begin{align*}
   \sup_{{\bf y}  \in \cC^{(2)}}\bP(\tilde \tau_{-\delta}({\bf y})>n-m) 
   &\leq \sup_{\stackrel{{\bf z} \in \cC}{\vert {\bf z}\vert \leq A,  {\rm dist}({\bf z},  \partial \cC)\leq 2 \epsilon}}\bP(\tau^{bm}({\bf z}+c\epsilon {\bf 1})>1/2) +o(n^{-r})
\\
&\preceq 
\sup_{\stackrel{{\bf z} \in \cC}{\vert {\bf z}\vert \leq A,  {\rm dist}({\bf z},  \partial \cC)\leq 2 \epsilon}} u({\bf z}+c\epsilon {\bf 1})+o(n^{-r})\\
&=\underbrace{\sup_{\stackrel{{\bf z}' \in \cC}{\vert {\bf z}'\vert \leq A',  \ {\rm dist}({\bf z}',  \partial \cC)\leq c' \epsilon}} u({\bf z}')}_{=o(\epsilon)}+o(n^{-r})\quad {\rm for \ some \ constants}\quad A',  c'>0. 
\end{align*}
 This completes the proof of step 2.

 \noindent \underline{Proof of Step 3.}  We fix  ${\bf y}  \in \cC^{(3)}$ and  set $ m_\epsilon=\lfloor \epsilon^3n\rfloor $. Let us decompose 
 $\bP( \tau({\bf x})>n,   {\bf x} +S(n)\in  \llbracket {\bf y}, {\bf y}+\delta{\bf 1}\rrbracket)$
 as
 \begin{align*}
\bP( \tau({\bf x})>n,  &  \  {\bf x} +S(n)\in  \llbracket {\bf y}, {\bf y}+\delta{\bf 1}\rrbracket)\\
 & = \sum_{{\bf z}\in  \delta \mathbb Z^d}
\bP( \tau({\bf x})>n,   {\bf x} +S(n-m_\epsilon) \in\llbracket {\bf z},   {\bf z} +\delta{\bf 1}\llbracket,   {\bf x} +S(n)\in  \llbracket {\bf y}, {\bf y}+\delta{\bf 1}\rrbracket)
\\
& = \underbrace{\sum_{\stackrel{{\bf z}\in  \delta \mathbb Z^d}{\vert {\bf y}-{\bf z}\vert > \epsilon \sqrt{n}}}
\bP( \tau({\bf x})>n,   {\bf x} +S(n-m_\epsilon) \in\llbracket {\bf z},   {\bf z} +\delta{\bf 1}\llbracket,   {\bf x} +S(n)\in  \llbracket {\bf y}, {\bf y}+\delta{\bf 1}\rrbracket)}_{\Sigma_1(n,  \epsilon, {\bf y})}
\\
&\qquad +  
 \underbrace{\sum_{\stackrel{{\bf z}\in  \delta \mathbb Z^d}{\vert {\bf y}-{\bf z}\vert \leq\epsilon \sqrt{n}}}
\bP( \tau({\bf x})>n,   {\bf x} +S(n-m_\epsilon) \in\llbracket {\bf z},   {\bf z} +\delta{\bf 1}\llbracket,   {\bf x} +S(n)\in  \llbracket {\bf y}, {\bf y}+\delta{\bf 1}\rrbracket)}
_{\Sigma_2(n,  \epsilon, {\bf y})}
\end{align*}
By the Markov property 
\begin{align*}
&\Sigma_1(n,  \epsilon, {\bf y})  \\
&\ = \sum_{\stackrel{{\bf z}\in  \delta \mathbb Z^d}{\vert {\bf y}-{\bf z}\vert  >\epsilon \sqrt{n}}}
\int_{\llbracket {\bf z},   {\bf z} +\delta{\bf 1}\llbracket }
 \bP \Bigl( \tau({\bf x})>n-m_\epsilon,   {\bf x} +S(n-m_\epsilon) \in {\rm d} {\bf z}' \Bigr)\ \bP(\tau( {\bf z}')>m_\epsilon,   {\bf z}'+S(m_\epsilon) \in  \llbracket {\bf y}, {\bf y}+\delta{\bf 1}\rrbracket)
\\
&\ \leq \sum_{\stackrel{{\bf z}\in  \delta \mathbb Z^d}{\vert {\bf y}-{\bf z}\vert  >\epsilon \sqrt{n}}}
 \bP \Bigl( \tau({\bf x})>n-m_\epsilon,   {\bf x} +S(n-m_\epsilon) \in\llbracket {\bf z},   {\bf z}+\delta {\bf 1}\rrbracket\Bigr)\\
 &\qquad \qquad \qquad \qquad \qquad \qquad \qquad \qquad \times 
 \underbrace{\sup_{\stackrel{{\bf z}'\in\llbracket {\bf z},   {\bf z}+\delta {\bf 1}\rrbracket}{\vert {\bf z}'-{\bf y}\vert >\epsilon\sqrt{n}-\delta}}\bP(\tau( {\bf z}')>m_\epsilon,    {\bf z}'+S(m_\epsilon) \in \llbracket {\bf y},  {\bf y}  +\delta {\bf 1}\rrbracket)}
 _{\preceq n^{-d/2} \epsilon ^{-3d/2} e^{-c/\epsilon},  \ {\rm by\ inequality \  (\ref{estimate}(b)).  }}. 
 \end{align*}
  Hence,  by Proposition \ref{exitTime}, 
\begin{align}\label{sigma_1}
\Sigma_1(n,  \epsilon, {\bf y})&   \preceq  n^{-d/2} \epsilon^{-3d/2} e^{-c/\epsilon} 
\underbrace{\sum_{{\bf z}\in  \delta \mathbb Z^d }
\bP \Bigl( \tau({\bf x})>n-m_\epsilon,   {\bf x} +S(n-m_\epsilon) \in\llbracket {\bf z},   {\bf z}+\delta {\bf 1}\rrbracket \Bigr)}_{=\ \ \bP ( \tau({\bf x})>n-m_\epsilon)   } \notag \\
&\preceq {V({\bf x})\over n^{d/2+p/2}}\epsilon^{-3d/2} e^{-c/\epsilon}. 
\end{align}
 Hence,  $\displaystyle \lim_{\epsilon\to 0}\limsup_{n \to +\infty} n^{d/2+p/2}\sup_{{\bf y} \in \mathcal C^{(3)}}\Sigma_1(n,  \epsilon, {\bf y})=0. $

 Similarly,  by the Markov property, 
\begin{align}\label{decompsigma_2}
&\Sigma_2(n,  \epsilon, {\bf y}) \notag \\
&\ = \sum_{\stackrel{{\bf z}\in  \delta \mathbb Z^d}{\vert {\bf y}-{\bf z}\vert  \leq\epsilon \sqrt{n}}}
\int_{\llbracket {\bf z},   {\bf z} +\delta{\bf 1}\llbracket }
 \bP \Bigl( \tau({\bf x})>n-m_\epsilon,   {\bf x} +S(n-m_\epsilon) \in {\rm d} {\bf z}' \Bigr)\ \bP(\tau( {\bf z}')>m_\epsilon,   {\bf z}'+S(m_\epsilon) \in   \llbracket {\bf y}, {\bf y}+\delta{\bf 1}\rrbracket)
  \notag
 \\
 &\ =  \Sigma'_2(n,  \epsilon, {\bf y})  - \Sigma_2''(n,  \epsilon, {\bf y}) 
 \end{align}
 where
 \[ \Sigma'_2(n,  \epsilon, {\bf y})  :=  \sum_{\stackrel{{\bf z}\in  \delta \mathbb Z^d}{\vert {\bf y}-{\bf z}\vert  \leq\epsilon \sqrt{n}} }
 \int_{\llbracket {\bf z},   {\bf z} +\delta{\bf 1}\llbracket }
 \bP \Bigl( \tau({\bf x})>n-m_\epsilon,   {\bf x} +S(n-m_\epsilon) \in   {\rm d} {\bf z}'\Bigr) \bP(  {\bf z}'+S(m_\epsilon) \in  \llbracket {\bf y}, {\bf y}+\delta{\bf 1}\rrbracket) \  
\]
and 
\begin{align*}
&\Sigma''_2(n,  \epsilon, {\bf y}) \\
&:= \sum_{\stackrel{{\bf z}\in  \delta \mathbb Z^d}{\vert {\bf y}-{\bf z}\vert  \leq\epsilon \sqrt{n}} }
 \int_{\llbracket {\bf z},   {\bf z} +\delta{\bf 1}\llbracket }
 \bP \Bigl( \tau({\bf x})>n-m_\epsilon,   {\bf x} +S(n-m_\epsilon) \in   {\rm d} {\bf z}'\Bigr) \bP(\tau( {\bf z}')\leq m_\epsilon,    {\bf z}'+S(m_\epsilon) \in  \llbracket {\bf y}, {\bf y}+\delta{\bf 1}\rrbracket).
\end{align*}
Since ${\bf y}  \in \cC^{(3)}$ and  $\vert {\bf y}-{\bf z}\vert  \leq\epsilon \sqrt{n}$,  it holds,  for any $ {\bf z}' \in\llbracket {\bf z},   {\bf z}+\delta {\bf 1}\rrbracket$ and $n$ large enough 
 \[
 {\rm dist}( {\bf z}',  \partial \cC)\geq {\rm dist}({\bf y},   \partial \cC)-\vert {\bf y}-{\bf z}\vert -\delta\geq \epsilon \sqrt{n}-\delta\geq  {\sqrt{m_\epsilon}\over 2\sqrt{\epsilon}} ;\]  
 hence,  by (\ref{estimate}($c$)),  
 \[
 \bP(\tau( {\bf z}')\leq m_\epsilon,    {\bf z}'+S(m_\epsilon) \in   {\bf y}  + {\bf B} )\preceq  m_\epsilon^{-d/2}  e^{-c /\epsilon }=n^{-d/2} \epsilon^{-3d/2} e^{-c /\epsilon }.
  \]
 Consequently,
\begin{align}\label{sigma'_2}
 \Sigma''_2(n,  \epsilon, {\bf y})
 &\preceq n^{-d/2}
 \Bigl(
 \sum_{\stackrel{{\bf z}\in  \delta \mathbb Z^d}{\vert {\bf y}-{\bf z}\vert  \leq\epsilon \sqrt{n}}}
 \bP ( \tau({\bf x})>n-m_\epsilon,   {\bf x} +S(n-m_\epsilon) \in\llbracket {\bf z},   {\bf z} +\delta{\bf 1}\llbracket)  \Bigr) \epsilon^{-3d/2} e^{-c /\epsilon }\notag
 \\
 &\leq 
 n^{-d/2}  \ \bP( \tau({\bf x})>n- m_\epsilon)\ \epsilon^{-3d/2} e^{-c /\epsilon },
\end{align}
which readily implies $\displaystyle \lim_{\epsilon\to 0}\limsup_{n \to +\infty}\  n^{d/2+p/2}  \sup_{{\bf y}  \in \cC^{(3)}} \Sigma_2''(n,  \epsilon, {\bf y})=0. $
 
  It remains to control  the term $ \Sigma'_2(n,  \epsilon, {\bf y})$. Using the local central  limit theorem for unconditioned and aperiodic multidimensional random walks  \cite{stone1965},   we have,  uniformly in $\bf y$, 
   
\begin{align}\label{sigma'_2*}
 \Sigma'_2(n,  \epsilon, {\bf y})&=\sum_{\stackrel{{\bf z}\in  \delta \mathbb Z^d}{\vert {\bf y}-{\bf z}\vert  \leq\epsilon \sqrt{n}}}
\int_{\llbracket {\bf z},   {\bf z} +\delta{\bf 1}\llbracket } \  \bP ( \tau({\bf x})>n-m_\epsilon,   {\bf x} +S(n-m_\epsilon) \in {\rm d} {\bf z}')
\bP({{\bf z}'}+S(m_\epsilon) \in  \llbracket {\bf y}, {\bf y}+\delta{\bf 1}\rrbracket)\
\notag
\\ 
&  
=\sum_{\stackrel{{\bf z}\in  \delta \mathbb Z^d}{\vert {\bf y}-{\bf z}\vert  \leq\epsilon \sqrt{n}}}
\int_{\llbracket {\bf z},   {\bf z} +\delta{\bf 1}\llbracket }\bP( \tau({\bf x})>n-m_\epsilon,    {\bf x} +S(n-m_\epsilon) \in {\rm d} {\bf z}')
\notag\\
&\qquad \qquad \qquad \qquad \qquad \qquad  {1\over (2\pi  m_\epsilon)^{d/2}}
 \left( \int_{ \llbracket {\bf y}, {\bf y}+\delta{\bf 1}\rrbracket} 
e^{-\vert{\bf y}'- {\bf z}'\vert^2/2 m_\epsilon}  {\rm d}{\bf y}'\right) (1 +o_n(1)) \notag\\
&\qquad \qquad \qquad \qquad \qquad \qquad {\rm with} \ o_n \ {\rm uniform \ in}\ {\bf y},   {\bf z},  \epsilon
\notag\\
&  
=\sum_{\stackrel{{\bf z}\in  \delta \mathbb Z^d}{\vert {\bf y}-{\bf z}\vert  \leq\epsilon \sqrt{n}}}
\int_{\llbracket {\bf z},   {\bf z} +\delta{\bf 1}\llbracket }\bP ( \tau({\bf x})>n-m_\epsilon,   {\bf x} +S(n-m_\epsilon) \in {\rm d} {\bf z}')
\notag\\
&\qquad \qquad \qquad \qquad \qquad \qquad  {1\over (2\pi  m_\epsilon)^{d/2}}
e^{-\vert {\bf y}- {\bf z}'\vert^2/2 m_\epsilon}\  \delta^d \ (1 +o_n(1))
\notag \\
&\qquad \qquad \qquad \qquad \qquad \qquad {\rm since} \ \vert {\bf y}-{\bf z}\vert  \leq\epsilon \sqrt{n} \ {\rm and } \  \vert {\bf y}'-{\bf y}\vert,   \vert  {\bf z}'-{\bf z}\vert \ {\rm bounded   } 
\notag\\
&  
={\delta^d\over (2\pi  m_\epsilon)^{d/2}} \ (1 +o_n(1))\notag\\
&\qquad \qquad \qquad\times 
\sum_{\stackrel{{\bf z}\in  \delta \mathbb Z^d}{\vert {\bf y}-{\bf z}\vert  \leq\epsilon \sqrt{n}}}
\int_{\llbracket {\bf z},   {\bf z} +\delta{\bf 1}\llbracket }e^{-\vert {\bf y}- {\bf z}'\vert^2/2 m_\epsilon} \bP ( \tau({\bf x})>n-m_\epsilon,   {\bf x} +S(n-m_\epsilon) \in {\rm d} {\bf z}'). 
\end{align}
  From the limit theorem for $(S(n))_n$ conditioned to stay in $\cC$ (see Proposition \ref{exitTime}),  it follows  that,  for every fixed $\epsilon >0$,   as $n \to +\infty$, 
\begin{align}\label{vbuzet}
 \sup_{{\bf y}  \in \cC^{(3)}}&\Big\vert
\sum_{\stackrel{{\bf z}\in  \delta \mathbb Z^d}{\vert {\bf y}-{\bf z}\vert  \leq\epsilon \sqrt{n}}}
\int_{\llbracket {\bf z},   {\bf z} +\delta{\bf 1}\llbracket }\bE \left(  e^{-\vert {\bf y}-{\bf x}-S(n-m_\epsilon)\vert ^2/2m_\epsilon}\Big\vert \tau({\bf x})>n-m_\epsilon\right) 
\notag\\
&
 \qquad\qquad\qquad\qquad -H_0\int_{\vert\sqrt{1-\epsilon^2} r-{{\bf y}\over \sqrt{n}}\vert<\epsilon }u(r) e^{-\vert r\vert^2/2}
e^{-\vert {\bf y}/\sqrt{n}-  \sqrt{1-\epsilon^2} r\vert ^2/2\epsilon^2}{\rm d}r
\Big\vert\longrightarrow 0.
\end{align} 
Since this last integral  equals $u({\bf y}/\sqrt{n}) e^{-\vert{\bf y}\vert^2/2n}(2\pi \epsilon)^{3d/2}+o(\epsilon^{3d/2})$ (see \cite{DW2015} for the details),   we obtain,  combining (\ref{sigma'_2*}) and  (\ref{vbuzet}), 
\begin{equation}\label{sigma''_2}
\lim_{\epsilon \to 0}\limsup_{n \to +\infty}\sup_{{\bf y}  \in \cC^{(3)}} \Big\vert
n^{d/2+p/2} \Sigma'_2(n,  \epsilon, {\bf y})- \kappa_0 H_0\ V({\bf x})\  u({\bf y}/\sqrt{n}) e^{-\vert {\bf y}\vert^2/2n}\ {\rm Leb}(\llbracket {\bf y}, {\bf y}+\delta{\bf 1}\rrbracket) 
\Big\vert=0 
\end{equation} 
for some constant $c>0$  . Step 3 is complete,  combining (\ref{sigma_1}),  
(\ref{decompsigma_2}),  (\ref{sigma'_2}) and  (\ref{sigma''_2}).
 $\Box$  
 
\section{\label{sectionproofTheo2} Proof of Theorem \ref{theolocal}}

 {\bf Proof of inequality }\eqref{localmaj}.  This inequality specifies that of  \cite[Lemma 28]{DW2015}, which is valid in the lattice case.  It is now a classical argument, used in several contexts  (classical random walks on $\mathbb R^d$,   products of random matrices, \dots) and based on the decomposition of the trajectory $(S(k)_{0\leq k\leq  n}$ into three parts; in the non lattice case,  as far as we know it is only done  in dimension 1 (see for instance Proposition 2.3 in \cite{ABKV}). The same argument holds in higher dimension and for the sake of completeness,  we recall it in detail  here. 
 
 We fix ${\bf x} \in \cC$ and a bounded Borel  set $\bf B\subset \cC$  whose boundary is negligible with respect to the Lebesgue measure.  Inequality  \eqref{localmaj} is quite rough   and we fix here  $\delta>0$  such that   ${\bf B}\subset \llbracket {\bf 0},  \delta {\bf 1}\rrbracket$.  

For any $n\geq 1$ and $1\leq k \leq n$,  we set  $S(k,  n):= X_{k+1}+\ldots +X_{n} $ for any $1\leq k\leq n$. We also decompose $S(n)$ into three parts;  writing $n'= \lfloor n/3\rfloor$ and $n''=n-\lfloor n/3\rfloor$,  it holds 
\[ 
S(n)=\underbrace{S(n')}_{S'(n)} \quad +\quad \underbrace{S(n'')-S(n')}_{S''(n):= X_{n'+1}+\cdots+ X_{n''}}  \quad +\quad \underbrace{S(n)-S(n'')}_{S'''(n):= X_{n''+1}+\cdots+ X_{n}}.
\]
Let us fix  ${\bf x},  {\bf y}  \in \cC$.  The event 
 $
E_n =E_n({\bf x},  {\bf y},   {\bf B}) :=\Bigl(\tau({\bf x})  > n,  {\bf x} +S(n)\in  \llbracket {\bf y}, {\bf y}+\delta{\bf 1}\rrbracket\Bigr)
$ is included in the intersection  $E'_n\ \cap \ E''_n\ \cap \ E'''_n$,   with

$\bullet\quad E'_n=\Bigl(\tau({\bf x})  > n'\Bigr);$

$\bullet \quad E''_n =\Bigl(S''(n) \in {\bf y}-{\bf x}-S'(n)-S'''(n) +{\bf B}\Bigr)$;

$\bullet \quad E'''_n = \Bigl( {\bf y}-(X_{k+1}+\cdots + X_n) \in \cC_{-\Delta}\ {\rm for} \ n''\leq k\leq n\Bigr)$.

 The events  $E'_n$ and $E'''_n$ are  both   measurable with respect to the $\sigma$-field  $\mathcal G_n$ generated by $X_1,  \ldots,  X_{n'}$ and $X_{n''+1},  \ldots,  X_n$. Consequently, 
\begin{align*}
\mathbb P(E_n)&\leq \mathbb P\Bigl(\mathbb P \Bigl(E'_n\ \cap \ E''_n\ \cap \ E'''_n \ \vert \ \mathcal G_n \Bigr)\Bigr)
\\
&=\mathbb E \big[ {\bf 1}_{E'_n\ \cap \ E'''_n  } \mathbb P( E''_n \ \vert \ \mathcal \cG_n ) \big].
\end{align*}
The random variable $S''(n)$ is  independent of  $\cG_n$ and its distribution coincides with the one of $ S(n')$. Therefore,  by the classical local central  limit theorem for   random walks on $\bR^d$,   there exists a constant $ C_{\bf B}>0$ such that 
\begin{align*}
\mathbb P \Bigl( E''_n \ \vert \ \mathcal \cG_n \Bigr)&
{\stackrel{\mathbb P {\rm -a.s.}}{\leq}} 
\sup_{{\bf z}\in  \bR^d} \mathbb P\Bigl(S(n')  \in   {\bf z}+{\bf B}\Bigr) \leq {C_{\bf B}\over n^{d/2}}.
\end{align*}
 Since the events  $E'_n$ and $E'''_n$ are independent,   it follows that 
 \begin{equation}\label{probEn}
 \mathbb P(E_n)\leq {C_{\bf B}\over n^{d/2}}\mathbb P(E'_n\ \cap \ E_n''')= {C_{\bf B}\over n^{d/2}}\ \mathbb P(E'_n)\mathbb P(E_n''').
 \end{equation}
The probability of $E'_n$ is controlled by Proposition \ref{exitTime}:   uniformly in ${\bf x} \in \cC$,  
 \begin{equation}\label{probE'n}
 \mathbb P(E'_n) \preceq {1+ \vert {\bf x}\vert^p\over n^{p/2}}.
 \end{equation} 
To control the probability of the event $E_n''' $,   we  notice   that
$\displaystyle 
(X_{n''+1},   \cdots,   X_{n} )  
{\stackrel{ {\rm dist}}{=}}
(X_1,   \cdots,  X_{n'})
$,  so that,   
  uniformly in $\bf {\bf y}\in \cC$,   
\begin{align}\label{probE'''n}
\mathbb P(E_n''') &\leq \mathbb P\Bigl( {\bf y}- S(1) \in \cC_{-\delta},  \cdots,  {\bf y}- S(n') \in \cC_{-\Delta}  \Bigr)\notag\\
 &= \bP(\tilde \tau_{-\delta}({\bf y}) >n)\notag\\
&
  \preceq {1+ \vert {\bf y}\vert^p\over n^{p/2}}. \end{align}
The proof is done,  by combining (\ref{probEn}),  (\ref{probE'n}) and (\ref{probE'''n}).

\noindent {\bf Proof of  convergence}  \eqref{local}. 
We fix several constants $\epsilon >0,    0<\delta < (1-\epsilon)\tilde \delta  $  and $0<t<1$; set $m=\lfloor n/2\rfloor $. 
By the Markov property,  
\begin{align*}
\bP( \tau({\bf x})>n,   {\bf x} +S(n)\in  {\bf B})
  &= \sum_{{\bf z}\in  \delta \mathbb Z^d} \sum_{\stackrel{{\bf y}\in {\tilde \delta}\mathbb  Z^d}{\llbracket {\bf y},  {\bf y}  + \tilde \delta {\bf 1} \llbracket\ \cap \ {\bf B}\neq \emptyset}}
\bP( \tau({\bf x})>n,  {\bf x} +S(m) \in\llbracket {\bf z},   {\bf z}+\delta{\bf 1}\llbracket,  \\
& \qquad     \qquad \qquad \qquad   \qquad \qquad \qquad \qquad \qquad   {\bf x} +S(n)\in \llbracket {\bf y},  {\bf y}  + \tilde \delta {\bf 1} \llbracket\ \cap \ {\bf B})  \\
&=
 \sum_{{\bf z}\in  \delta \mathbb Z^d} \sum_{\stackrel{{\bf y}\in {\tilde \delta}\mathbb  Z^d}{\llbracket {\bf y},  {\bf y}  + \tilde \delta {\bf 1}  \llbracket \ \cap \ {\bf B}\neq \emptyset}}
\int_{\llbracket {\bf z},   {\bf z}+\delta{\bf 1}\llbracket}\bP\Bigl( \tau({\bf x})>n,  {\bf x} +S(m) \in {\rm d}  {\bf z}'\Bigr)\\ 
& \qquad \qquad \qquad \qquad \qquad \bP (\tau( {\bf z}')>n-m,   {\bf z}'+S(n-m)\in
\llbracket {\bf y},  {\bf y}  +\tilde \delta {\bf 1}\llbracket \ \cap \ {\bf B})\
\end{align*}
so that,  by Proposition \ref{keyinclusionsGENERALversion}, 
\begin{equation}\label{doubleineq}
 \Sigma_-(n,  \epsilon, \delta,  \tilde \delta)\leq \bP( \tau({\bf x})>n,   {\bf x} +S(n)\in  {\bf B})\leq \Sigma_+(n, \delta,  \tilde \delta)
\end{equation}
with
\begin{align*}
\Sigma_+(n,  \delta,  \tilde \delta)
&=
 \sum_{{\bf z}\in  \delta \mathbb Z^d} \sum_{\stackrel{{\bf y}\in {\tilde \delta}\mathbb  Z^d}{\llbracket {\bf y},  {\bf y}  + \tilde \delta  {\bf 1}\llbracket\ \cap \ {\bf B}\neq \emptyset}}
\bP(\tau({\bf x})>n,  {\bf x} +S(m) \in\llbracket {\bf z},   {\bf z}+\delta {\bf 1}\rrbracket)\\
& \qquad \qquad \qquad \qquad  \qquad\qquad \times  \bP (\tilde{\tau}_{-\tilde \delta}({\bf y}) >n-m,  {\bf y}-S(n-m)\in
\llbracket {\bf z}-\tilde \delta {\bf 1},   {\bf z}+  \delta {\bf 1}\rrbracket) 
\end{align*}
 and 
\begin{align*}
&\Sigma_-(n,    \epsilon, \delta,  \tilde \delta)
 \\=
& \sum_{{\bf z}\in  \delta \mathbb Z^d \cap \mathcal C } \ \ 
 \sum_ { \stackrel{{\bf y}\in {\tilde \delta}\mathbb  Z^d\cap \mathcal C_{(1-\epsilon)\tilde \delta}}{\llbracket {\bf y},   {\bf y}  +(1-\epsilon)\tilde \delta {\bf 1}  \llbracket \subset {\bf B}}}
\Bigl(\bP(\tau({\bf x})>n,  {\bf x} +S(m) \in\llbracket {\bf z},   {\bf z}+ (1-\epsilon) \delta {\bf 1}\rrbracket)
\\
&   \qquad  \qquad \qquad\times  \bP (\tilde{\tau}_{\tilde \delta}({\bf y})>n-m,  {\bf y} - S(n-m)\in
 \llbracket {\bf z}-(1-\epsilon)(\tilde \delta-\delta){\bf 1}, {\bf z}+(1-\epsilon) \delta {\bf 1}  \rrbracket)\Bigr).
\end{align*}
\underline{(1) Let us first deal with  $\Sigma_+(n,  \delta,  \tilde \delta)$}.  We fix $A>0$ and decompose $\Sigma_+(n,  \delta,  \tilde \delta)$  as
\[
\Sigma_+(n,  \delta,  \tilde \delta)= \Sigma_+^{(1)}(A,  n,  \delta,  \tilde \delta)+\Sigma_+^{(2)}(A,  n,  \delta,  \tilde \delta)
\]
where
\begin{align*}
\bullet \ \Sigma_+^{(1)}(A,  n,  \delta,  \tilde \delta)
&:=  
 \sum_{\stackrel{{\bf z}\in  \delta \mathbb Z^d}{\vert {\bf z}\vert >A\sqrt{n}}}
 \ \  \sum_{\stackrel{{\bf y}\in {\tilde \delta}\mathbb  Z^d}{\llbracket {\bf y},  {\bf y}  + \tilde \delta  {\bf 1}\llbracket\ \cap \ {\bf B}\neq \emptyset}}
\bP(\tau({\bf x})>n,  {\bf x} +S(m) \in\llbracket {\bf z},   {\bf z}+\delta {\bf 1}\llbracket)\notag\\
& \qquad \qquad \qquad \qquad  \qquad\qquad \times  \bP (\tilde{\tau}_{-\tilde \delta}({\bf y}) >n-m,  {\bf y}-S(n-m)\in
\llbracket {\bf z},   {\bf z}+\tilde \delta{\bf 1}\llbracket]
\end{align*}
and
\begin{align*}
\bullet \ \Sigma_+^{(2)}(A,  n,  \delta,  \tilde \delta)
&:=  
 \sum_{\stackrel{{\bf z}\in  \delta \mathbb Z^d}{\vert {\bf z}\vert \leq A\sqrt{n}}} \ \ \sum_{\stackrel{{\bf y}\in {\tilde \delta}\mathbb  Z^d}{\llbracket {\bf y},  {\bf y}  + \tilde \delta  {\bf 1}\llbracket\ \cap \ {\bf B}\neq \emptyset}}
\bP(\tau({\bf x})>n,  {\bf x} +S(m) \in\llbracket {\bf z},   {\bf z}+\delta {\bf 1}\llbracket)\notag\\
& \qquad \qquad \qquad \qquad  \qquad\qquad \times  \bP (\tilde{\tau}_{-\tilde \delta}({\bf y}) >n-m,  {\bf y}-S(n-m)\in
\llbracket {\bf z},   {\bf z}+\tilde \delta{\bf 1}\llbracket)\\
&=  
\sum_{\stackrel{{\bf y}\in {\tilde \delta}\mathbb  Z^d}{\llbracket {\bf y},  {\bf y}  + \tilde \delta  {\bf 1}\llbracket\ \cap \ {\bf B}\neq \emptyset}}
\Sigma_+^{(2)}(A,  n,  \delta,  \tilde \delta,  {\bf y})
\end{align*}
with 
\begin{align*}
\Sigma_+^{(2)}(A,  n,  \delta,  \tilde \delta,  {\bf y})&:= 
\sum_{\stackrel{{\bf z}\in  \delta \mathbb Z^d}{\vert {\bf z}\vert \leq A\sqrt{n}}} 
\bP(\tau({\bf x})>n,  {\bf x} +S(m) \in\llbracket {\bf z},   {\bf z}+\delta{\bf 1}\llbracket)\\
&\qquad \qquad \qquad \qquad   \times  \bP (\tilde{\tau}_{-\tilde \delta}({\bf y}) >n-m,  {\bf y}-S(n-m)\in
\llbracket {\bf z},   {\bf z}+\tilde \delta{\bf 1}\llbracket).
\end{align*}
 
 \underline{$\bullet$ Control of the term $\Sigma_+^{(1)}(A,  n,  \delta,  \tilde \delta)$}
 
According to  Proposition \ref{exitTime} and Proposition \ref{keyinclusionsGENERALversion},  for any $A>0$,  
\begin{align*}
\label{sigma_+'A}
\Sigma_+^{(1)}(A,  n,  \delta,  \tilde \delta)
\leq {C({\bf x}) C({\bf y})\over n^{p+d/2}}
\ \bP \Bigl( \vert S(n-m)\vert \geq  A\sqrt{n} -\vert {\bf y}\vert -\delta-2\tilde \delta\Big\vert \tilde{\tau}_{-\tilde \delta}({\bf y})>n-m\Bigr) 
\end{align*}
so that,    by the limit theorem for $(-S(n))_n$ conditioned to stay in $\cC$, 
\begin{equation}\label{sigma+1Anepislonepsilon'}
\lim_{A\to +\infty} \limsup_{n \to +\infty} \Sigma_+^{(1)}(A,  n,  \delta,  \tilde \delta,  {\bf y})=0.
\end{equation}

 \underline{$\bullet$ Control of the terms $\Sigma^{(2)}_+(A,  n,  \delta,  \tilde \delta)$  } 
 
We fix  ${\bf y}\in {\tilde \delta}\mathbb  Z^d$ such that  $\llbracket {\bf y},  {\bf y}  + \tilde \delta {\bf 1} \llbracket\ \cap \ {\bf B}\neq \emptyset$. 
Applying Theorem \ref{theostonecone} first to $(-S(n))_n$ and then to $(S(n))_n$ we obtain
\begin{align*}
\Sigma^{(2)}_+(A,  n,  \delta,  \tilde \delta,  {\bf y})
&=
c^2 2^{p+d} V({\bf x})V'({\bf y}) \ 
 {1\over n^{p+d}}\ \delta^d\ (\delta+\tilde \delta)^d\\
 &\qquad  \times 
 \underbrace{\left(\sum_{\stackrel{{\bf z}\in  \delta \mathbb Z^d}{\vert {\bf z}\vert \leq A\sqrt{n}}} 
 u\left( {{\bf z}\over \sqrt{m}} \right)  u\left({{\bf z}\over \sqrt{n-m }}\right) \ e^{-{\vert {\bf z}\vert^2\over 2n}}\right)}_{I_n(A,  \delta)}+o(n^{-p-d/2})
\end{align*}
with,  by the homogeneity property (\ref{homogene-u}) satisfied by the function   $u$,  
\[
\lim_{n \to +\infty}\delta^d n^{-d/2}
 I_n(A,  \delta)= 2^p \int_{{\bf z} \in \cC\ \mid \ \vert w\vert \leq A} u^2({\bf z}) e^{-\vert {\bf z}\vert^2/2 }{\rm d}{\bf z}.
\]
Consequently,  
\begin{align*}
\lim_{A\to +\infty} \lim_{n \to +\infty} n^{p+d/2}\ \Sigma^{(2)}_+(A,  n,  \delta,  \tilde \delta,  {\bf y})
&= c^2  2^{2p+d} V({\bf x})V'({\bf y})  
\ (\delta+\tilde \delta)^d \ H_0\\
&\qquad \qquad \qquad {\rm with} \qquad H_0:=  \int_{{\bf z} \in \cC} u^2({\bf z}) e^{-\vert {\bf z}  \vert^2/2 }{\rm d}{\bf z}  
\end{align*}
so that
\begin{align*}
\lim_{\delta\to 0}
\lim_{A\to +\infty} \lim_{n \to +\infty} n^{p+d/2}\ \Sigma^{(2)}_+(A,  n,  \delta,  \tilde \delta,  {\bf y})=
\ c^2\ 2^{2p+d}\ H_0\   V({\bf x})V'({\bf y})   
\  (\tilde \delta)^d.
\end{align*}
Hence,  summing over ${\bf y}\in {\tilde \delta}\mathbb  Z^d$ such that  $\llbracket {\bf y},  {\bf y}  + \tilde \delta {\bf 1}  \llbracket\ \cap \ {\bf B}\neq \emptyset$,  
\begin{equation}\label{sigma+2Anepislonepsilon'}
\lim_{\delta \to 0} \lim_{A\to +\infty} \lim_{n \to +\infty} n^{p+d/2}\ \Sigma^{(2)}_+(A,  n,  \delta,  \tilde \delta )= c^2 \  H_0\ {V({\bf x})\over (t(1-t))^{p +d/2}} 
\underbrace{\sum_{\stackrel{{\bf y}\in {\tilde \delta}\mathbb  Z^d}{\llbracket {\bf y},  {\bf y}  + \tilde \delta {\bf 1} \llbracket\ \cap \ {\bf B}\neq \emptyset}}
\ V'({\bf y})\ (\tilde \delta)^d}_{\int_{\bf B}V'({\bf y}) {\rm d} {\bf y}\ +\ o(\tilde \delta).}
\end{equation}
Finally,   combining  (\ref{sigma+1Anepislonepsilon'}) and (\ref{sigma+2Anepislonepsilon'}) implies
\begin{equation}\label{sigma+}
\lim_{\tilde \delta\to 0}
\lim_{\delta \to 0} \lim_{n \to +\infty} n^{p+d/2}\ \Sigma_+(n,  \delta,  \tilde \delta )= c^2 \ 2^{2p+d}  H_0\  V({\bf x}) 
\int_{\bf B}V'({\bf y}) {\rm d} {\bf y}.\end{equation}
\underline{(2) Term $\Sigma_-(n,  \delta,  \tilde \delta)$.} 

Similarly,  it holds
\begin{equation}\label{sigma-}
\lim_{\tilde \delta\to 0}
\lim_{\delta \to 0} \lim_{\epsilon\to 0} \lim_{n \to +\infty} n^{p+d/2}\ \Sigma_-(n,  \delta,  \tilde \delta )= c^2 \ 2^{2p+d}  H_0\  V({\bf x}) 
\int_{\bf B}V'({\bf y}) {\rm d} {\bf y}.\end{equation}

 The proof is complete,  combining (\ref{doubleineq}),  (\ref{sigma+}) and (\ref{sigma-}).
 $\Box$
 
 \section*{Acknowledgements}
This research work was initiated in 2019 when  CNRS supported the third author D. T. Son. 
M. Peign\'e and  D. C. Pham were  supported  recently by ANR-23-CE40-0008. 
The three authors   thank  the anonymous reviewer for his careful reading and numerous comments, which have helped to improve this text.

 \begin{figure}[h]
\centering
\setlength{\unitlength}{1mm}

\end{figure}


\begin{thebibliography}{99}

\bibitem{ABKV}
\textsc{Afanasyev V. I.,  B\"oinghoff C.,  Kersting G.,  \& Vatutin V. A. } (2012)  {\it Limit theorems for weakly subcritical branching processes in random environment}, Journal of Theoretical Probability,  vol. 25,  no. 3,  703--732.

\bibitem{BS} {\sc  Banuelos, R., \& Smits, R.G.}  (1997) {\it Brownian motion in cones},  Probab. Theory Related Fields, 
vol. 108,  no. 1,  299--319.

 


\bibitem{BBE} {\sc  Babillot M.,  Bougerol Ph.  \&  Elie L.  }  (1997) {\it The random difference equation
    $X_{n+1}=A_nX_n+B_n$
 in the critical case},  Annals of Probability,  
vol. 25,  no. 1,  478--493.

\bibitem{BL}\textsc{Bougerol Ph.  \&   Lacroix J.} (1985) \textit{Products of Random Matrices with Applications to Schr\"odinger  Operators},  Birkh\"auser.

\bibitem{Bour}\textsc{Bourbaki N.} (2007) \textit{ \'El\'ements de math\'ematique. Topologie  g\'en\'erale.  Chapitres  5 \`a 10 },  Berlin: Springer.

\bibitem{DEW}\textsc{Denis D., Elizarov N. \& Wachtel V. } (2026) \textit{ Harmonic polynomials and other exactly computable characteristics for 2-dimensional random walks in cones},  arXiv:2601.03866v1.

\bibitem{DV} \textsc{Dyakonova E. E.  \& Vatutin V. A. } (2017) {\it Multitype branching processes in random environment: survival probability for the critical case},  Teor. Veroyatnost. i Primenen.,  2017,  vol. 62,   no. 4,   634--653.

 
\bibitem{DW2015}\textsc{Denisov D. \&  Wachtel V.}  (2015)  {\it Random walks in cones},  
	The Annals of Probab.,   vol. 43,    no. 3,  992--1044.  

\bibitem{DW2019}\textsc{Denisov D. \&  Wachtel V.}  (2019)  {\it Alternative constructions of a harmonic function for a random walk in a cone},  
	Electron. J. Probab.  vol. 24,    no. 92,  1--26.  


\bibitem{FIM} {\sc  Fayolle,  G.,  Iasnogorodski  R. \& Malyshev V.} (1999)
{\it   Random walks in the quarter-plane: algebraic
methods,  boundary value problems and applications},   Springer-Verlag,  Berlin.



\bibitem{Fu} {\sc  Fulmek  M.} (2012) {\it Viewing Determinants as Nonintersecting Lattice Paths yields Classical Determinantal Identities Bijectively},   The Elect. J. of Combinatoric,   vol. 19,  no. 3,  1--46.
		
	\bibitem{FK}\textsc{Furstenberg H. \&   Kesten H.}  (1960)  {\it Products of random matrices},  
	Ann. Math. Statist.,   vol. 31,   457--469.  
	
	
	\bibitem{GK}\textsc{Geiger J. \&   Kersting G. } (2001) {\it The survival probability of a critical branching process in random environment},   
Theory of Probability and its Applications,   vol. 45,  No. 3,  517--525.
		 
\bibitem{GZ}\textsc{Gessel I.M.  \&   Zeilberger,  D. } (1992) {\it Random walk in a Weyl chamber},   
  Proc. Amer. Math. Soc.,  vol. 115:,  27--31.



	\bibitem{Hennion}\textsc{Hennion H.} (1997) {\it Limit theorems for products of positive random matrices},  Annals of Probability,   vol. 25,  no. 4,   1545--1587.  

	\bibitem{HR}\textsc{Hubert E. \&   Raschel K.}  (1960)  {\it Discrete harmonic polynomials in multidimensional orthants},  
	arXiv:2505.19622v1.
	
	\bibitem{LP1}\textsc{Lepage E. \&  Peign\'e M.} (1997) {\it  A local limit theorem on the semi-direct product of $\mathbb R^{*+}$ and $\mathbb R^d$},   Ann. Inst. Henri Poincar\'e,  vol. 33,  no. 2,  223--252.

\bibitem{LP2}\textsc{Lepage E.  \&  Peign\'e M.} (1999) {\it  Local limit theorems on some non unimodular groups.},    Revista Matematica Iberoamericana,  vol. 15,  no. 1,  117--141.
	
		 
\bibitem{LPP}\textsc{Le Page E.,  Peign\'e M. \& Pham C.} (2018) {\it The survival probability of a critical multitype branching process in i.i.d. random environment},  Ann. Probab. vol. 46,  no. 5,  2946--2972.
	
 \bibitem{Lesigne2021}\textsc{Lesigne E.} (2021) {\it private communication}.
	 
	
 \bibitem{Spitzer} {\sc Spitzer L.}(1964) {\it Principles of random walks},   D. van Nostrand Company. 
 
 

 
\bibitem{stone1965} {\sc Stone C.}(1965) {\it A local limit theorem for nonlattice multidimensional distribution functions},   Ann. Math. Statist.
vol. 36,   no. 2,  546--551.
 
 \bibitem{Var} {\sc Varopoulos  N.Th.}(2000) {\it Potential theory in conical domains. II.},    Math. Proc. Camb. Phil. Soc., 
vol.36,  No.2,  546--551.  

 \bibitem{VW} {\sc  Vatutin V. \&    Wachtel V.}(2009) {\it Local probabilities for random walks conditioned to stay positive},    Probab. Theory Related Fields,  vol. 143,  177--217,  2009.
\end{thebibliography}
\end{document}